\newtheorem{defi}{Definition}[section]
\newtheorem{teo}[defi]{Theorem}
\newtheorem{lem}[defi]{Lemma}
\newtheorem{prop}[defi]{Proposition}
\newtheorem{cor}[defi]{Corollary}
\newtheorem{os}[defi]{Remark}
\newcommand{\B}{\operatorname{B}}
\def\subjclass#1{{\renewcommand{\thefootnote}{}%
\footnote{\emph{Mathematics Subject Classification (2010):} #1}}}
\begin{document}

\title[The Tutte Polynomial of the Grigorchuk group and the Basilica group]{The Tutte Polynomial of the Schreier graphs of the Grigorchuk group and the Basilica group}

\author[T. Ceccherini-Silberstein]{Tullio Ceccherini-Silberstein}
\address{%
Tullio Ceccherini-Silberstein\\
Dipartimento di Ingegneria\\
Università del Sannio\\
C.so Garibaldi, 107\\
82100 Benevento, Italia}\email{tceccher@mat.uniroma1.it}

\author[A. Donno]{Alfredo Donno}
\address{%
Alfredo Donno\\
Dipartimento di Matematica \lq\lq G. Castelnuovo\rq\rq\\
Sapienza Università di Roma\\
Piazzale A. Moro, 2\\
00185 Roma, Italia}\email{donno@mat.uniroma1.it}

\author[D. Iacono]{Donatella Iacono}
\address{%
Donatella Iacono\\
Dipartimento di Matematica \lq\lq G. Castelnuovo\rq\rq\\
Sapienza Università di Roma\\
Piazzale A. Moro, 2\\
00185 Roma, Italia}\email{iacono@mat.uniroma1.it}

\date{July 17, 2010}

\begin{abstract}
We study the Tutte polynomial of two infinite families of finite
graphs. These are the Schreier graphs associated with the action
of two well-known self-similar groups acting on the binary rooted
tree by automorphisms: the first Grigorchuk group of
intermediate growth, and the iterated monodromy group of the
complex polynomial $z^2-1$ known as the  Basilica
group. For both of them, we describe the Tutte polynomial
and we compute several special evaluations of it, giving
further information about the combinatorial structure of these
graphs.
\end{abstract}

\subjclass{05C31, 05C15, 05C30, 05C38, 20E08.}

\keywords{Tutte polynomial, Schreier graph, Grigorchuk group,
Basilica group, spanning subgraph, acyclic orientation,
reliability polynomial, chromatic polynomial, partition function
of the Ising model.}

\maketitle

\section{Introduction}

The Tutte polynomial is a two-variable polynomial which can be
associated with a graph, a matrix, or, more generally, with a
matroid. It has many interesting applications in several areas of
sciences as, for instance, Combinatorics, Probability, Statistical
Mechanics, Computer Science and Biology. It was introduced by W.T.
Tutte \cite{tutte47, tutte54, tutte67} and we will mainly refer to
\cite{bollobas, equivalence, machi, tutte04} as expository papers.

Given a finite graph $G$, its Tutte polynomial $T(G;x,y)$
satisfies a fundamental universal property with respect to the
deletion-contraction reduction of the graph. Hence, any
multiplicative graph invariant with respect to a
deletion-contraction reduction turns out to be an evaluation of
it. This polynomial is quite interesting since several
combinatorial, enumerative and algebraic properties of the graph
-- such as the number of spanning trees,
of spanning connected subgraphs, of spanning forests and of acyclic
orientations of the graph --
can be investigated by considering special evaluations of this polynomial. 
Moreover, from the Tutte polynomial one also recovers the reliability and the chromatic polynomials. It has also many interesting connections with statistical
mechanical models as the Potts model \cite{potts}, percolation
\cite{percolation}, the Abelian Sandpile Model \cite{cori,
merino}, as well as with the theory of error correcting codes
\cite{potts}.

In this paper, we study the Tutte polynomial of the Schreier
graphs associated with the action of two well-known automorphism
groups of the binary rooted tree: the Grigorchuk group and the 
iterated monodromy group of the complex polynomial $z^2-1$ known as the Basilica
group. See also \cite{donnoiacono}, where the Tutte polynomials of
the Sierpi\'{n}ski graphs  and the Schreier graphs
of the Hanoi Towers group $H^{(3)}$  are computed.

The first Grigorchuk was introduced by R. Grigorchuk in 1980; it
yields the simplest solution of the Burnside problem (an infinite,
finitely generated torsion group) and the first example of a finitely generated group
of intermediate (i.e.  faster than polynomial but slower than exponential) growth. 
See \cite{CMS} and \cite{grigorchuk} for a detailed account and further references.

The Basilica group was introduced by R. Grigorchuk and A. \.{Z}uk
in \cite{primo} as a group generated by a three-state automaton.
It is a remarkable fact due to V. Nekrashevych \cite{volo} that
this group can be described as the iterated monodromy group
 of the complex polynomial $z^2-1$; therefore, there exists
 a natural way to associate with it a compact limit space
homeomorphic to the well-known Basilica fractal. Moreover, it is
the first example of an amenable group (a highly non--trivial and
deep result of L. Bartholdi and B. Vir\'ag \cite{amen}) not
belonging to the class of subexponentially amenable groups, which
is the smallest class containing all groups of subexponential
growth and closed after taking subgroups, quotients, extensions
and direct unions.

Over the last decade, Grigorchuk and a number of coauthors have
developed a new exciting direction of research focusing on
finitely generated groups acting by automorphisms on rooted trees,
transitively on each level \cite{fractal}. They proved that these
groups have deep connections with the theory of profinite groups
and with complex dynamics. In particular, many groups of this type
satisfy a property of self-similarity (see Definition
\ref{defiselfsimilar}), reflected on fractalness of some
limit objects associated with them \cite{volo}.

In Sections \ref{sezione Grigor} and \ref{sezio Basilica}, we
study the Tutte polynomial for the Schreier graphs
$\{\Gamma_n\}_{n\geq 1}$ and $\{\B_n\}_{n\geq 1}$ of the
Grigorchuck group and the Basilica group, respectively. It follows
from the recursive expression of the generators of these groups
that these graphs have a cactus structure, i.e., they are union of
cycles, arranged in a tree-like way. This enables us to compute the
Tutte polynomial using the multiplicative property
\eqref{prodotto} (see Section \ref{subsec.preli TUTTE}). Once we
have these polynomials, we compute many special
evaluations of them, providing several interesting
information about the combinatorial structure of these graphs and
showing connections with reliability, colorability and the Ising model
(see Section \ref{subsec.preli TUTTE} for definitions and
details). Note that some evaluations of the Tutte polynomial are
trivial, when the graphs $\{\Gamma_n\}_{n\geq 1}$ and
$\{\B_n\}_{n\geq 1}$ are considered with loops; for instance, the number of acyclic orientations, the chromatic polynomial and the partition function of the Ising model.
Therefore, we make these computations on the graphs $\{\Gamma_n^\ast \}_{n\geq 1}$ and $\{\B_n^\ast\}_{n\geq 1}$, obtained from the graphs
$\{\Gamma_n\}_{n\geq 1}$ and $\{\B_n\}_{n\geq 1}$, respectively,
by deleting loops. For the considered graphs, we explicitly describe:
\begin{itemize}
\item {\it the Tutte polynomial} (Theorems \ref{thmtuttegrigorchuk} and
\ref{thmtuttebasilica});\smallskip
\item {\it the reliability polynomial} (Propositions \ref{prop.reliabGRIG} and
\ref{prop.reliability BAS});\smallskip
\item {\it the complexity} (Propositions
\ref{pro.complexi.GRIG} and \ref{prop.complexity BAS});\smallskip
\item {\it the number of connected spanning subgraphs} (Propositions \ref{prop.conn.span.subgra.GRIG} and
\ref{prop.conn.spanning.subgr BAS});\smallskip
\item {\it the number of spanning forests} (Propositions
\ref{prop.conn.span.forest.GRIG} and \ref{prop.spanning forest
BAS});\smallskip
\item {\it the number of acyclic orientations} (Propositions \ref{prop.acyclic.orien.GRIG} and
\ref{prop.acyclic orienta BAS});\smallskip
\item {\it the chromatic polynomial} (Propositions
\ref{prop.chromatic poly.GRIG} and \ref{prop.chromatic polyno
BAS});\smallskip
\item {\it the partition function of the Ising model} (Theorems \ref{thmisinggrig} and \ref{thmisingBas}).
\end{itemize}

\section{Preliminaries}\label{Section preliminare}

\subsection{The Tutte polynomial} \label{subsec.preli TUTTE}
 Throughout the paper, we deal with graphs which are connected and
finite. Moreover, both multiple edges and multiple loops are
allowed. As usual, $G=(V(G),E(G))$ denotes a graph with vertex set
$V(G)$ and edge set $E(G)$; we will often write $V$ and $E$, when
there is no risk of confusion, and so $G=(V,E)$. Moreover, we
denote by $E_n$ the graph with $n$ vertices and no edges, and by
$K_n$ the complete graph on $n$ vertices. A subgraph
$A=(V(A),E(A))$ of a graph $G=(V(G),E(G))$ is said to be \emph{spanning}
if the condition $V(A)=V(G)$ is satisfied. In particular, a
\emph{spanning subtree} of $G$ is a spanning subgraph of $G$ which
is a tree, a \emph{spanning forest} of $G$ is a spanning subgraph
of $G$ which is a forest. The number of spanning trees of a graph
$G$ is called \emph{complexity} of $G$ and is denoted by
$\tau(G)$.  It is interesting to study complexity when the system
grows. More precisely, given a sequence $\{G_n\}_{n\geq 1}$ of
finite graphs with complexity $\tau(G_n)$, such that $|V(G_n)|\to
\infty$, the limit
$$
\lim_{|V(G_n)| \to \infty}\frac{\log \tau(G_n)}{|V(G_n)|},
$$
when it exists, is called the \emph{asymptotic growth
constant} of the spanning trees of $\{G_n\}_{n\geq 1}$ \cite{lyons}.\\
Finally, let $k(G)$ be the number of connected components of $G$.

\begin{defi}
Let $A$ be a spanning subgraph of $G$, then the rank $r(A)$ and
the nullity $n(A)$ of $A$ are defined as
\[
r(A)=|V(A)| -k(A)= |V(G)|-k(A) \quad \mbox{ and } \quad
n(A)=|E(A)| - r(A)=|E(A)|-|V(A)| + k(A).
\]
\end{defi}

\begin{defi}[Spanning subgraphs]\label{defspanning}
Let $G=(V,E)$ be a graph. The Tutte polynomial $T(G;x,y)$ of $G$
is defined as
\begin{eqnarray}\label{defsubgraphs}
T(G;x,y)= \sum_{A\subseteq G}(x-1)^{r(G)-r(A)}(y-1)^{n(A)},
\end{eqnarray}
where the sum runs over all the spanning subgraphs $A$ of $G$.
\end{defi}

The Tutte polynomial can be also defined by a recursion process
given by deleting and contracting edges. We recall that, given
$G=(V,E)$, the graph $G\setminus e=(V,E-\{e\})$ is obtained from
$G$ by deleting the edge $e\in E$. The graph obtained by
contracting an edge $e\in E$ is the result of the identification
of the endpoints of $e$ followed by removing $e$. We denote it by
$G/ e$. Finally, we recall that an edge in a connected graph is a
\emph{bridge} if its deletion disconnects the graph, it is a
\emph{loop} if its endpoints coincide.

\begin{defi}[Deletion-Contraction]\label{contracting}
Let $G=(V,E)$ be a graph. The Tutte polynomial $T(G;x,y)$ of $G$
is defined as
$$T(G;x,y)=
\left\{
\begin{array}{ll}
   1  & \hbox{if $G=E_1$;} \\
   xT(G\setminus e;x,y)  & \hbox{if e is a bridge;} \\
   yT(G\setminus e;x,y)  & \hbox{if e is a loop;} \\
   T(G\setminus e;x,y) + T(G/ e;x,y)  & \hbox{if e is neither a bridge nor a loop.} \\
  \end{array}
\right.$$
\end{defi}

The recursive process to compute the Tutte polynomial in this
second definition is independent on the order in which the edges
are chosen: this can be proven by showing that Definitions
\ref{defspanning} and \ref{contracting} are equivalent
\cite{equivalence}.

Once we have the definition, we can state some of the main
properties of the Tutte polynomial (for more details, see
\cite{bollobas, equivalence, machi}). Recall that a one point join
$G*H$ of two graphs $G$ and $H$ is obtained by identifying a
vertex $v$ of $G$ and a vertex $w$ of $H$ into a single vertex of
$G*H$.  The following property can be easily
proven by using Definition \ref{defspanning}:
\begin{eqnarray}\label{prodotto}
T(G*H;x,y)=T(G;x,y)T(H;x,y).
\end{eqnarray}
This property will be fundamental for our computations and we will
refer to it as the \emph{multiplicative property} of the Tutte
polynomial. The next lemma follows from Definition
\ref{contracting}, by using the multiplicative property.

\begin{lem}\label{singlecycle}
If $C_k$ is a cycle of length $k$, with $k\geq 2$, then its Tutte
polynomial $T(C_k;x,y)$ is $ y+x+x^2+\cdots +x^{k-1}$.
\end{lem}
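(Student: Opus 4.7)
The plan is to prove this by induction on $k$, applying the deletion-contraction rule from Definition \ref{contracting} together with the multiplicative property \eqref{prodotto}.

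For the base case $k=2$, note that $C_2$ consists of two vertices joined by two parallel edges. Choose one of the two edges $e$: removing it leaves the other edge, so $e$ is not a bridge, and since its endpoints are distinct it is not a loop either. The deletion-contraction rule therefore yields $T(C_2;x,y) = T(C_2\setminus e; x,y) + T(C_2/e; x,y)$. Here $C_2\setminus e$ is a single edge, whose only edge is a bridge, so its Tutte polynomial equals $x\cdot T(E_1;x,y) = x$. On the other hand, $C_2/e$ is a single vertex bearing one loop, whose Tutte polynomial equals $y\cdot T(E_1;x,y) = y$. Summing gives $T(C_2;x,y) = x+y$, as required.

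For the inductive step, assume $T(C_{k-1};x,y)=y+x+x^2+\cdots+x^{k-2}$ and consider $C_k$ with $k\geq 3$. Select any edge $e$; again $e$ is neither a bridge nor a loop, so $T(C_k;x,y) = T(C_k\setminus e;x,y) + T(C_k/e;x,y)$. Contracting $e$ produces a cycle of length $k-1$, giving $T(C_k/e;x,y) = T(C_{k-1};x,y)$ by the inductive hypothesis. Deleting $e$ produces a path $P_k$ on $k$ vertices with $k-1$ edges. Since $P_k$ can be realized as an iterated one-point join of $k-1$ copies of $K_2$, the multiplicative property \eqref{prodotto} gives $T(P_k;x,y) = T(K_2;x,y)^{k-1} = x^{k-1}$. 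Adding the two contributions yields $T(C_k;x,y) = x^{k-1} + y + x + \cdots + x^{k-2}$, completing the induction.

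I do not expect any real obstacle in this proof; the only subtlety is a careful handling of the base case, since $C_2$ is a multigraph with two parallel edges rather than a simple cycle, and one must check that the deletion-contraction identifications produce precisely the single-edge graph and the single-loop graph.
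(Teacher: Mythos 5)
Your proof is correct and follows essentially the same route as the paper: induction on the cycle length, deletion--contraction on a single edge so that contraction yields the shorter cycle and deletion yields a path whose Tutte polynomial is $x^{k-1}$ by the multiplicative property. The only difference is that you spell out the base case $C_2$ in more detail than the paper does, which is fine.
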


\begin{proof}
The proof is by induction on the length $k$ of the cycle. For
$k=2$, $C_2$ is a $2$-cycle. It is trivial to verify that the
Tutte polynomial of the graph
\begin{center}
\begin{picture}(300,8)
\letvertex A=(130,4)\letvertex B=(170,4)
\drawvertex(A){$\bullet$}\drawvertex(B){$\bullet$}
\drawundirectededge(A,B){}
\end{picture}
\end{center}
is $x$, using Definition \ref{contracting}. Therefore, for the
graph $C_2$
\begin{center}
\begin{picture}(300,8)
\letvertex A=(130,4)\letvertex B=(170,4)
\drawvertex(A){$\bullet$}\drawvertex(B){$\bullet$}
\drawundirectedcurvededge(A,B){} \drawundirectedcurvededge(B,A){}
\end{picture}
\end{center}
the Tutte polynomial is $T(C_2;x,y)=y+x$, again by Definition
\ref{contracting}, so that the assertion is true for $k=2$. Now
let $C_{k+1}$ be a cycle of length $k+1$. Let $e$ be a fixed edge
of $C_{k+1}$, so that by Definition \ref{contracting},
$$
T(C_{k+1};x,y)= T(C_k;x,y) + T(C_{k+1}\setminus e;x,y).
$$
Since, by the multiplicative property, $T(C_{k+1}\setminus e;x,y) =
x^k$, we can apply induction and we get the required result.
\end{proof}

In the next sections, we will be interested in special evaluations
of the Tutte polynomial, that allow us to deduce many
combinatorial and algebraic properties of the graphs considered.
In the following theorem, we collect many of these properties that
are well-known in literature.

\begin{teo} \cite[Theorem 3 and 8]{machi}\label{evaluations}
Let $G=(V,E)$ be a connected graph and denote by $T(G; x,y)$ its
Tutte polynomial. Then:
\begin{enumerate}
\item $T(G; 1,1) =\tau(G)$;
\item $T(G; 1,2)$ is the number of spanning connected subgraphs of $G$;
\item $T(G; 2,1)$ is the number of spanning forests  of $G$;
\item $T(G; 2,2)= 2^{|E|}$;
\item $T(G; 2,0)$ is the number of acyclic orientations of $G$, i.e., orientations having no oriented
cycles.
\end{enumerate}
\end{teo}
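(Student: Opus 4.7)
The plan is to read off items (1)--(4) directly from the spanning-subgraph definition (Definition \ref{defspanning}), and to reduce item (5) to a classical theorem of Stanley relating the chromatic polynomial at $-1$ to the number of acyclic orientations.

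For items (1)--(4), I would start from
\[
T(G;x,y) \;=\; \sum_{A\subseteq G}(x-1)^{r(G)-r(A)}(y-1)^{n(A)}
\]
and substitute the required values, observing each time which spanning subgraphs actually contribute. When $x=y=1$, both exponentials force $(0)^{r(G)-r(A)}=1$ and $(0)^{n(A)}=1$; hence only those $A$ with $r(A)=r(G)=|V|-1$ (i.e., spanning and connected) and $n(A)=0$ (i.e., acyclic) survive, and these are exactly the spanning trees, proving~(1). Setting $x=1, y=2$ leaves the condition $r(A)=r(G)$ only, counting all spanning connected subgraphs, which yields~(2). Setting $x=2, y=1$ leaves the condition $n(A)=0$ only, counting all spanning forests, which gives~(3). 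Setting $x=y=2$ collapses every term to $1$, so the sum becomes the total number of edge subsets, namely $2^{|E|}$, which gives~(4).

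Item (5) is the genuinely non-trivial one and cannot be read off the sum above. The route I would take is through the chromatic polynomial $P(G;k)$. Using Definition \ref{defspanning} and the standard deletion-contraction identity for $P$, one verifies the Whitney-type formula
\[
P(G;k) \;=\; (-1)^{r(G)}\,k^{k(G)}\,T(G;1-k,0).
\]
For a connected graph this gives $(-1)^{|V|-1} P(G;-1) = -\,T(G;2,0)$, and hence $|P(G;-1)| = T(G;2,0)$. Then I would invoke Stanley's theorem, which asserts that for any graph the number of acyclic orientations equals $(-1)^{|V|}P(G;-1)$; combining this with the previous identity yields~(5).

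The main obstacle is clearly~(5): (1)--(4) amount to inspecting when $0^{m}$ is nonzero, while~(5) depends on the comparatively deeper bridge between evaluations of the Tutte/chromatic polynomial at negative integers and combinatorial structures on $G$. In a self-contained write-up, the bulk of the work would be either proving Stanley's reciprocity directly (by an inductive deletion-contraction argument on the number of edges, using Definition \ref{contracting} to check that both sides satisfy the same recursion with the same base case $T(E_n;2,0)=1$) or else citing it as the authors do here.
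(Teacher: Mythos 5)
Your proposal is correct: items (1)--(4) follow exactly as you say from the rank--nullity expansion (with the convention $0^{0}=1$), and your reduction of item (5) to Stanley's theorem via the identity $\chi(G,\lambda)=(-1)^{r(G)}\lambda^{k(G)}T(G;1-\lambda,0)$ is sound, the signs working out to $T(G;2,0)=(-1)^{|V|}\chi(G,-1)$. The paper itself gives no proof of this theorem --- it is quoted from \cite[Theorems 3 and 8]{machi} --- and your argument is precisely the standard one found in that reference, so there is nothing further to compare.
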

Another fundamental interesting aspect of the Tutte polynomial is
that, starting from it, one can obtain other interesting
polynomials associated with the graph: the \emph{reliability
polynomial} and the \emph{chromatic polynomial}.\\ \indent More
precisely, as regards the reliability polynomial $R(G,p)$, suppose
that each edge of $G$ is independently chosen to be active (or
open) with probability $p$ or inactive (closed) with probability
$1-p$. Then, $R(G,p)$ is defined as the probability that in this
random model there is a
path of active edges between each pair of vertices of $G$.\\
\indent As regards the colorability, we recall that a proper (or
admissible) $\lambda$-coloring of the vertices of $G$ is an
assignment of $\lambda$ colors to the vertices of $G$, in such a
way that adjacent vertices have distinct colors. $G$ is said
$\lambda$-colorable if it admits a proper $\lambda$-coloring. The
\emph{chromatic number} $\chi(G)$ of $G$ is defined as the minimal
number $\lambda$ such that $G$ is $\lambda$-colorable. $G$ is
uniquely $\lambda$-colorable if $\chi(G)=\lambda$ and any
$\lambda$-coloring of $G$ induces the same partition of $V(G)$
(vertices with the same color are in the same class). The
chromatic polynomial $\chi(G, \lambda)$ gives, for all values
$\lambda$, the number of proper $\lambda$-colorings of $G$. The
famous $4$-color Theorem states that, if $G$ is a planar graph,
then $\chi(G,4)>0$.

\noindent The connection with the Tutte polynomial is given by the
following theorem.
\begin{teo}\cite[Theorem 12 and 17]{machi}\label{twopolynomials}
 Let $G=(V,E)$ be a graph. Then,
\begin{enumerate}
\item $\displaystyle
R(G,p)=p^{|V(G)|-1}(1-p)^{|E(G)|-|V(G)|+1}T\left(G;1,\frac{1}{1-p}\right)$;
\item $\chi(G, \lambda) = (-1)^{r(G)}\lambda ^{k(G)}
T(G;1-\lambda,0)$.
\end{enumerate}
\end{teo}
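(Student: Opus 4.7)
The plan is to derive both identities directly from the subgraph expansion (Definition~\ref{defspanning}) by rewriting each side as a sum over spanning subgraphs $A\subseteq E$ and matching them term by term.

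For part (1), I would start from the probabilistic definition of $R(G,p)$: the set of active edges is a random subset $A\subseteq E$ occurring with probability $p^{|A|}(1-p)^{|E|-|A|}$, and the event of interest is that the spanning subgraph $(V,A)$ is connected, i.e.\ $k(A)=1$, or equivalently $r(A)=r(G)=|V|-1$. Thus
\[
R(G,p)=\sum_{A\subseteq E,\; r(A)=r(G)} p^{|A|}(1-p)^{|E|-|A|}.
\]
On the Tutte side, substituting $x=1$ in \eqref{defsubgraphs} annihilates every term with $r(A)<r(G)$, leaving
\[
T(G;1,y)=\sum_{A:\, r(A)=r(G)}(y-1)^{|A|-r(G)}.
\]
Plugging $y=1/(1-p)$, so that $y-1=p/(1-p)$, and multiplying through by the prefactor $p^{|V|-1}(1-p)^{|E|-|V|+1}=p^{r(G)}(1-p)^{|E|-r(G)}$, the exponents recombine to $p^{|A|}(1-p)^{|E|-|A|}$, giving exactly $R(G,p)$.

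For part (2), I would first prove Whitney's rank formula
\[
\chi(G,\lambda)=\sum_{A\subseteq E}(-1)^{|A|}\lambda^{k(A)}
\]
by inclusion-exclusion on the set of ``monochromatic'' edges of a candidate coloring $c\colon V\to\{1,\dots,\lambda\}$: for a fixed $A$, the number of $c$ that are constant on each endpoint pair of $A$ equals $\lambda^{k(A)}$ (one free choice per component of $(V,A)$), and the alternating sum counts proper colorings. Then I would expand $(-1)^{r(G)}\lambda^{k(G)}T(G;1-\lambda,0)$ via \eqref{defsubgraphs}, using the basic identities $r(G)-r(A)=k(A)-k(G)$ and $n(A)=|A|-r(A)$ to rewrite the general term and verify it reduces precisely to $(-1)^{|A|}\lambda^{k(A)}$.

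The work is essentially bookkeeping; the only subtle step is the sign collapse in (2), where three separate factors of $(-1)$ (from the prefactor $(-1)^{r(G)}$, from $((1-\lambda)-1)^{r(G)-r(A)}=(-\lambda)^{r(G)-r(A)}$, and from $(0-1)^{n(A)}$) combine. Using $2r(G)-2r(A)\equiv 0\pmod 2$ and the rewriting above, the exponent of $-1$ simplifies to $|A|$ and the exponent of $\lambda$ to $k(A)$, so the Tutte side equals Whitney's formula term by term, completing the proof. No serious obstacle is expected beyond this careful algebraic manipulation.
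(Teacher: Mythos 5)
Your proof is correct, but note that the paper does not actually prove this statement: it is quoted verbatim from the cited reference (\cite[Theorems 12 and 17]{machi}) and used as a black box. What you have written is the standard derivation found in that literature, and it checks out in every detail. In part (1) the key observation that setting $x=1$ in \eqref{defsubgraphs} kills all terms with $r(A)<r(G)$ (via the convention $0^0=1$) is exactly right, and the exponent bookkeeping $p^{r(G)}(1-p)^{|E|-r(G)}\cdot\bigl(\tfrac{p}{1-p}\bigr)^{|A|-r(G)}=p^{|A|}(1-p)^{|E|-|A|}$ is correct; the only implicit hypothesis is that $G$ is connected, so that $r(A)=r(G)$ is equivalent to $(V,A)$ being connected and the prefactor exponent $|V|-1$ equals $r(G)$ --- this is harmless here since the paper assumes connectivity throughout. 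In part (2), Whitney's rank expansion via inclusion--exclusion on monochromatic edges is the standard route, and your sign computation $r(G)+(r(G)-r(A))+n(A)=2r(G)-2r(A)+|A|\equiv|A|\pmod 2$ together with $r(G)-r(A)=k(A)-k(G)$ collapses the general term to $(-1)^{|A|}\lambda^{k(A)}$ exactly as claimed. So your proposal supplies a complete, self-contained proof of a result the paper merely imports; there is nothing to compare beyond noting that the authors chose citation over derivation.
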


Finally, we want to recall the well-known connection between the
Tutte polynomial of a graph and the Ising model on it, which is
obtained as a special case of the $Q$-Potts model, for $Q=2$.\\
\indent The famous Ising model of ferromagnetism consists of
discrete variables called spins arranged on the vertices of the
graph $G$. Each spin can take values ${\pm 1}$ and only interacts
with its nearest neighbors. Configuration of spins at two adjacent
vertices $i$ and $j$ has energy $J>0$ if the spins have opposite
values, and $-J$ if the values are the same. Let $|V(G)|=N$, and
let $\vec\sigma = (\sigma_1,...,\sigma_N)$  denote the
configuration of spins, with $\sigma_i\in\{\pm 1\}$. The total
energy of the system in configuration $\vec\sigma$ is then
$$
E(\vec\sigma) =-J\sum_{i\sim j}\sigma_i \sigma_j ,
$$
where $i \sim j$ means that the vertices $i$ and $j$ are adjacent
in $G$. The probability of a particular configuration at
temperature $T$ is given by
$$
\mathbb{P}(\vec\sigma) = \frac{1}{Z} \exp (-\beta E(\vec\sigma)),
$$
where $\beta$ is the \lq\lq inverse temperature\rq\rq
conventionally defined as $\beta\equiv1/(k_B T)$, and $k_B$
denotes the Boltzmann constant. As usual in statistical physics,
the normalizing constant $Z$, that makes the distribution above a
probability measure, is called the \emph{partition function}:
$$
Z=\sum_{\vec\sigma}\exp(-\beta E(\vec\sigma)).
$$
It is known \cite{potts} that the partition function $Z$ of the
Ising model on $G$ can be obtained by evaluating the Tutte
polynomial $T(G;x,y)$ on the hyperbola $(x-1)(y-1)=2$. More
precisely, one has:
$$
Z = 2(e^{2\beta J}-1)^{|V(G)|-1}e^{-\beta
J|E(G)|}T\left(G;\frac{e^{2\beta J}+1}{e^{2\beta J}-1},e^{2\beta
J}\right).
$$
In the next sections we will explicitly verify this correspondence
for the Schreier graphs of both the Grigorchuk and Basilica
groups, using the computations of the partition functions made in
\cite{noiising}.

\subsection{Groups of automorphisms of rooted regular trees}

We recall some basic facts about self-similar groups. Let $T_q$ be
the infinite regular rooted tree of degree $q$, i.e., the rooted
tree in which each vertex has $q$ children. Each vertex of the
$n$-th level of the tree can be regarded as a word of length $n$
in the alphabet $X=\{0,1,\ldots, q-1\}$. Moreover, one can
identify the set $X^{\omega}$ of infinite words in $X$ with the
set $\partial T_q$ of infinite geodesic rays starting at the root
of $T_q$. Next, let $S<Aut(T_q)$ be a group acting on $T_q$ by
automorphisms generated by a finite symmetric set of generators
$Y$. Moreover, suppose that the action is transitive on each level
of the tree.

\begin{defi}\label{defischreiernovembre}
The $n$-th {\it Schreier graph} $\Sigma_n$ of the action of $S$ on
$T_q$, with respect to the generating set $Y$, is a graph whose
vertex set coincides with the set of vertices of the $n$-th level
of the tree, and two vertices $u,v$ are adjacent if and only if
there exists $s\in Y$ such that $s(u)=v$. If this is the case, the
edge joining $u$ and $v$ is labelled by $s$.
\end{defi}
The vertices of $\Sigma_n$ are labelled by words of length $n$ in
$X$ and the edges are labelled by elements of $Y$. The Schreier
graph is thus a regular graph of degree $|Y|$ with $q^n$ vertices,
and it is connected, since the action of $S$ is level-transitive.

\begin{defi}\cite{volo}\label{defiselfsimilar}
A finitely generated group $S<Aut(T_q)$ is {\it self-similar} if,
for all $g\in S, x\in X$, there exist $h\in S, y\in X$ such that
$$
g(xw)=yh(w),
$$
for all finite words $w$ in the alphabet $X$.
\end{defi}

Self-similarity implies that $S$ can be embedded into the wreath
product $Sym(q)\wr S$, where $Sym(q)$ denotes the symmetric group
on $q$ elements, so that any automorphism $g\in S$ can be
represented as
$$
g=\tau(g_0,\ldots,g_{q-1}),
$$
where $\tau\in Sym(q)$ describes the action of $g$ on the first
level of $T_q$ and $g_i\in S, i=0,...,q-1$, is the restriction of
$g$ on the full subtree of $T_q$ rooted at the vertex $i$ of the
first level of $T_q$ (observe that any such subtree is isomorphic
to $T_q$). Hence, if $x\in X$ and $w$ is a finite word in $X$, we
have
$$
g(xw)=\tau(x)g_x(w).
$$
In the next sections, the Schreier graphs of the Grigorchuk group and of the Basilica group will be described. For both of them, we recall some substitutional rules that allow to recursively construct these sequences of graphs, starting from the Schreier graph associated with the action of the group on the first level of the rooted binary tree.

\section{The Tutte polynomial of the Schreier graphs of the Grigorchuk group}\label{sezione Grigor}

The Grigorchuk group admits the following description as a
self-similar group of automorphisms of the rooted binary tree. It is
generated by the elements
$$
a=\epsilon(id,id), \qquad b=e(a,c), \qquad c=e(a,d), \qquad
d=e(id,b),
$$
where $e$ and $\epsilon$ are respectively the trivial and the
non-trivial permutations in $Sym(2)$. Note that each generator is
an involution.

The following substitutional rules describe how to construct the
graph $\Gamma_{n+1}$ from $\Gamma_n$ \cite{hecke, grigorchuk}.
More precisely, the construction consists in replacing the
labelled subgraphs of $\Gamma_{n}$ on the top of the picture by
new labelled graphs (on the bottom).
\begin{center}
\begin{picture}(400,110)
\letvertex A=(65,100)\letvertex B=(105,100)\letvertex C=(145,100)
\letvertex D=(185,100)\letvertex E=(225,100)\letvertex F=(265,100)\letvertex G=(305,100)
\letvertex H=(345,100)\letvertex I=(45,20)\letvertex L=(65,20)\letvertex M=(105,20)\letvertex N=(125,20)
\letvertex c=(145,20)
\letvertex d=(185,20)\letvertex e=(225,20)\letvertex f=(265,20)\letvertex g=(305,20)
\letvertex h=(345,20)

\put(82,60){$\Downarrow$}\put(162,60){$\Downarrow$}\put(242,60){$\Downarrow$}\put(322,60){$\Downarrow$}

\put(62,92){$u$} \put(102,92){$v$}\put(142,92){$u$}
\put(182,92){$v$}\put(222,92){$u$} \put(262,92){$v$}
\put(302,92){$u$}\put(342,92){$v$}

\put(40,10){$1u$} \put(62,10){$0u$}\put(102,10){$0v$}
\put(122,10){$1v$}

\put(141,10){$1u$} \put(181,10){$1v$}\put(221,10){$1u$}
\put(261,10){$1v$} \put(301,10){$1u$}\put(341,10){$1v$}

\drawvertex(A){$\bullet$}\drawvertex(B){$\bullet$}
\drawvertex(C){$\bullet$}\drawvertex(D){$\bullet$}
\drawvertex(E){$\bullet$}\drawvertex(F){$\bullet$}
\drawvertex(G){$\bullet$}\drawvertex(H){$\bullet$}
\drawvertex(I){$\bullet$}\drawvertex(L){$\bullet$}
\drawvertex(M){$\bullet$}\drawvertex(N){$\bullet$}
\drawvertex(c){$\bullet$}\drawvertex(d){$\bullet$}
\drawvertex(e){$\bullet$}\drawvertex(f){$\bullet$}
\drawvertex(g){$\bullet$}\drawvertex(h){$\bullet$}

\drawundirectedloop(L){$d$}\drawundirectedloop(M){$d$}

\drawundirectededge(A,B){$a$}\drawundirectededge(C,D){$b$}
\drawundirectededge(E,F){$c$} \drawundirectededge(G,H){$d$}
\drawundirectededge(c,d){$d$}\drawundirectededge(e,f){$b$}
\drawundirectededge(g,h){$c$}
\drawundirectededge[r](L,I){$a$}\drawundirectededge(M,N){$a$}
\drawundirectedcurvededge(L,M){$b$}\drawundirectedcurvededge[b](L,M){$c$}
\end{picture}
\end{center}
The starting point is the Schreier graph $\Gamma_1$ of the first
level.
\begin{center}
\begin{picture}(200,40)
\letvertex A=(70,25)\letvertex B=(130,25)

\put(67,16){$0$}\put(127,16){$1$}\put(15,22){$\Gamma_1$}

\drawvertex(A){$\bullet$}\drawvertex(B){$\bullet$}

\drawundirectedloop[l](A){$b,c,d$}\drawundirectedloop[r](B){$b,c,d$}
\drawundirectededge(A,B){$a$}
\end{picture}
\end{center}
In computing the Tutte polynomial of $\Gamma_n$, we are interested
in the unlabelled graph. We draw here the graphs $\Gamma_n$, for
$n=1,2,3$.
\begin{center}
\begin{picture}(300,40)
\letvertex A=(60,20)\letvertex B=(90,20)
\letvertex C=(155,20)\letvertex D=(185,20)
\letvertex E=(225,20)\letvertex F=(255,20)
\drawundirectedloop[l](A){}\drawundirectedloop[r](B){}\drawundirectedloop[l](C){}\drawundirectedloop(D){}\drawundirectedloop(E){}
\drawundirectedloop[r](F){}\drawundirectedloop[t](A){}\drawundirectedloop[b](A){}\drawundirectedloop[t](B){}\drawundirectedloop[b](B){}
\drawundirectedloop[t](C){}\drawundirectedloop[b](C){}\drawundirectedloop[t](F){}\drawundirectedloop[b](F){}

\drawundirectededge(A,B){} \drawundirectededge(C,D){}
\drawundirectededge(E,F){}
\drawundirectedcurvededge(D,E){}\drawundirectedcurvededge(E,D){}

\put(30,18){$\Gamma_1$} \put(273,18){$\Gamma_2$}

\drawvertex(A){$\bullet$}\drawvertex(B){$\bullet$}
\drawvertex(C){$\bullet$}\drawvertex(D){$\bullet$}
\drawvertex(E){$\bullet$}\drawvertex(F){$\bullet$}
\end{picture}
\end{center}

\begin{center}
\begin{picture}(300,40)
\letvertex A=(30,20)\letvertex B=(60,20)\letvertex C=(100,20)\letvertex D=(130,20)
\letvertex E=(170,20)\letvertex F=(200,20)\letvertex G=(240,20)\letvertex H=(270,20)

\drawundirectededge(A,B){} \drawundirectededge(C,D){}
\drawundirectededge(E,F){} \drawundirectededge(G,H){}

\put(290,18){$\Gamma_3$}

\drawvertex(A){$\bullet$}\drawvertex(B){$\bullet$}
\drawvertex(C){$\bullet$}\drawvertex(D){$\bullet$}
\drawvertex(E){$\bullet$}\drawvertex(F){$\bullet$}
\drawvertex(G){$\bullet$}\drawvertex(H){$\bullet$}

\drawundirectedcurvededge(B,C){} \drawundirectedcurvededge(C,B){}
\drawundirectedcurvededge(D,E){}\drawundirectedcurvededge(E,D){}
\drawundirectedcurvededge(F,G){} \drawundirectedcurvededge(G,F){}
\drawundirectedloop[l](A){}\drawundirectedloop(B){}\drawundirectedloop(C){}\drawundirectedloop(D){}\drawundirectedloop(E){}
\drawundirectedloop(F){}\drawundirectedloop(G){}\drawundirectedloop[r](H){}\drawundirectedloop[t](A){}\drawundirectedloop[b](A){}
\drawundirectedloop[t](H){}\drawundirectedloop[b](H){}
\end{picture}
\end{center}
In general, one can check by using the substitutional rules that
$\Gamma_n$ has a linear shape, obtained by alternating bridges and
$2$-cycles. More precisely, it is easy to prove the following
equalities:
$$
|V(\Gamma_n)| = 2^n \qquad \qquad |E(\Gamma_n)|=5\cdot2^{n-1}+2.
$$
As regards edges, note that there are $2^{n-1}$ bridges,
$2^{n-1}-1$ cycles of length 2 and $2^{n}+4$ loops, since there is
a loop rooted at each vertex, except for the outmost vertices,
where three loops are rooted.

Since many computations are trivial for graphs with loops, it is
convenient to consider $\Gamma_n^\ast$, defined as the graph obtained from $\Gamma_n$ by erasing loops. Thus, in this case, we have
$$
|V(\Gamma_n^\ast)| = 2^n \qquad \qquad |E(\Gamma_n^
\ast)|=3\cdot2^{n-1}-2.
$$
For every $n \geq 1 $, denote by $T_n(x,y)$ the Tutte polynomial
$T(\Gamma_n;x,y)$ of $\Gamma_n$ and  by $T_n^\ast(x,y)$ the Tutte
polynomial $T(\Gamma_n^\ast;x,y)$ of $\Gamma_n^\ast$.

\begin{teo}\label{thmtuttegrigorchuk}
For each $n\geq 1$, the Tutte polynomial of the graph $\Gamma_n$
is
$$
T_n(x,y) = y^{2^n+4} x^{2^{n-1}} (y+x)^{2^{n-1}-1}.
$$
\end{teo}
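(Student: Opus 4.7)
The plan is to apply the multiplicative property \eqref{prodotto} of the Tutte polynomial to the block decomposition of $\Gamma_n$. The paragraph preceding the theorem already records the structural facts we need: $\Gamma_n$ is a cactus (a linear chain alternating bridges and $2$-cycles, with loops attached) whose blocks consist of exactly $2^n+4$ loops, $2^{n-1}$ bridges, and $2^{n-1}-1$ cycles of length~$2$. Since any cactus is an iterated one-point join of its blocks, iterating \eqref{prodotto} gives
\[
T(\Gamma_n;x,y) \;=\; \prod_{B}\, T(B;x,y),
\]
where the product runs over the blocks $B$ of $\Gamma_n$.

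It then remains to record the Tutte polynomial of each type of block. For a single loop, Definition~\ref{contracting} (applied to the loop edge, with $G\setminus e = E_1$) gives a contribution of $y$. For a single bridge between two vertices, Definition~\ref{contracting} (applied to the bridge edge, with $G\setminus e$ consisting of two isolated vertices, each a bridge to be contracted trivially, or equivalently by directly recognizing the edge as a bridge whose deletion yields $E_1 * E_1$) gives a contribution of $x$. Finally, by Lemma~\ref{singlecycle} with $k=2$, the $2$-cycle contributes $y+x$.

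Multiplying the contributions over all blocks yields
\[
T_n(x,y) \;=\; y^{\,2^n+4}\cdot x^{\,2^{n-1}}\cdot (y+x)^{\,2^{n-1}-1},
\]
as claimed. The only step that requires care, and is the closest thing to an obstacle, is justifying the block count, i.e.\ that the substitutional rules really produce a cactus of the asserted shape with the stated numbers of loops, bridges and $2$-cycles; this can be handled by a straightforward induction on $n$ from the description of the replacement rules, but since the statement of the block structure is already asserted in the text before the theorem, we may simply invoke it and the rest of the argument is a one-line application of the multiplicative property.
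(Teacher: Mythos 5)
Your proposal is correct and follows essentially the same route as the paper: the paper's proof is precisely the one-line application of the multiplicative property \eqref{prodotto}, with each loop, bridge and $2$-cycle contributing $y$, $x$ and $(y+x)$ respectively, and the block counts taken from the structural description preceding the theorem. Your additional remarks justifying the per-block contributions via Definition~\ref{contracting} and Lemma~\ref{singlecycle} are accurate but not a departure from the paper's argument.
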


\begin{proof}
It suffices to apply the multiplicative property, keeping in mind
that each loop, bridge or $2$-cycle contributes by a factor $y, x$
or $(y+x)$, respectively.
\end{proof}

\begin{cor}\label{cor.tutte.grigSTAR}
For each $n\geq 1$, the Tutte polynomial of the graph $\Gamma_n^\ast$  is
$$
T_n^{\ast}(x,y) = x^{2^{n-1}}  (y+x)^{2^{n-1}-1}.
$$
\end{cor}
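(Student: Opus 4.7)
The proof is essentially a one-line consequence of what has already been established. The graph $\Gamma_n^\ast$ is obtained from $\Gamma_n$ by deleting all $2^n+4$ loops, while preserving the cactus structure of alternating bridges and $2$-cycles. Thus one may argue in either of two equivalent ways, both relying on the multiplicative property \eqref{prodotto}.

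The most direct approach is to apply the multiplicative property to $\Gamma_n^\ast$ itself. As noted just before Theorem \ref{thmtuttegrigorchuk}, the graph $\Gamma_n$ has $2^{n-1}$ bridges and $2^{n-1}-1$ cycles of length $2$, arranged in a cactus (one-point-join) pattern. Removing loops preserves the fact that $\Gamma_n^\ast$ decomposes as a one-point-join of these $2^{n-1}$ bridges and $2^{n-1}-1$ copies of $C_2$. Each bridge contributes a factor of $x$ by Definition \ref{contracting}, and each $2$-cycle contributes a factor of $y+x$ by Lemma \ref{singlecycle}. The multiplicative property \eqref{prodotto} then yields the claimed formula immediately.

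Alternatively, one may deduce the corollary directly from Theorem \ref{thmtuttegrigorchuk}. Each loop in $\Gamma_n$ is attached to the rest of the graph at a single vertex, so it may be viewed as a one-point-join of a self-loop (whose Tutte polynomial is $y$ by Definition \ref{contracting}) with $\Gamma_n^\ast$. Iterating the multiplicative property over all $2^n+4$ loops gives $T_n(x,y)=y^{2^n+4}\,T_n^\ast(x,y)$, and dividing through produces the stated expression.

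There is no real obstacle in this argument; the only bookkeeping to confirm is the count of bridges and $2$-cycles in $\Gamma_n^\ast$, which matches the count $|E(\Gamma_n^\ast)|=3\cdot 2^{n-1}-2=2\cdot 2^{n-1}+2(2^{n-1}-1)$ (the bridges contributing $2^{n-1}$ edges and the $2$-cycles contributing $2(2^{n-1}-1)$ edges), so the decomposition is exhaustive.
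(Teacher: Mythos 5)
Your proof is correct and matches the paper's (implicit) argument: the corollary is just the multiplicative property applied to the cactus decomposition of $\Gamma_n^\ast$ into $2^{n-1}$ bridges and $2^{n-1}-1$ copies of $C_2$, equivalently obtained by stripping the factor $y^{2^n+4}$ from Theorem \ref{thmtuttegrigorchuk}. The only blemish is a typo in your final edge count, where $2\cdot 2^{n-1}+2(2^{n-1}-1)$ should read $2^{n-1}+2(2^{n-1}-1)=3\cdot 2^{n-1}-2$, consistent with your own parenthetical remark.
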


Let us start by writing the reliability polynomial
$R(\Gamma_n,p)$.
\begin{prop}\label{prop.reliabGRIG}
For each $n\geq 1$, the reliability polynomial $R(\Gamma_n,p)$ is
given by
$$
R(\Gamma_n,p)
 = p^{2^n-1}(2-p)^{2^{n-1}-1}.
$$
\end{prop}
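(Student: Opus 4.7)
The plan is to apply Theorem \ref{twopolynomials}(1), which expresses the reliability polynomial directly in terms of an evaluation of the Tutte polynomial:
$$
R(\Gamma_n,p) = p^{|V(\Gamma_n)|-1}(1-p)^{|E(\Gamma_n)|-|V(\Gamma_n)|+1}\,T_n\!\left(1,\tfrac{1}{1-p}\right).
$$
Since Theorem \ref{thmtuttegrigorchuk} already gives the closed form of $T_n(x,y)$, the proof reduces to a direct substitution followed by algebraic simplification; there is no essential combinatorial obstacle beyond correctly tracking the exponents.

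First I would record the two cardinalities already established in the text, namely $|V(\Gamma_n)|=2^n$ and $|E(\Gamma_n)|=5\cdot 2^{n-1}+2$, from which
$$
|V(\Gamma_n)|-1 = 2^n-1, \qquad |E(\Gamma_n)|-|V(\Gamma_n)|+1 = 3\cdot 2^{n-1}+3.
$$
Next I would evaluate the Tutte polynomial at $(1,\tfrac{1}{1-p})$. Because $T_n(x,y)=y^{2^n+4}x^{2^{n-1}}(y+x)^{2^{n-1}-1}$, substituting $x=1$ kills the middle factor, and the remaining two factors combine to give
$$
T_n\!\left(1,\tfrac{1}{1-p}\right) = \left(\tfrac{1}{1-p}\right)^{2^n+4}\left(\tfrac{2-p}{1-p}\right)^{2^{n-1}-1} = \frac{(2-p)^{2^{n-1}-1}}{(1-p)^{3\cdot 2^{n-1}+3}}.
$$

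Finally I would multiply everything together. The key observation is that the $(1-p)$-exponent appearing in the prefactor from Theorem \ref{twopolynomials}(1) is exactly $3\cdot 2^{n-1}+3$, which cancels the denominator produced in the Tutte evaluation. What survives is
$$
R(\Gamma_n,p) = p^{2^n-1}(2-p)^{2^{n-1}-1},
$$
as claimed. The only point requiring a moment of care is the arithmetic $5\cdot 2^{n-1}+2-2^n+1 = 3\cdot 2^{n-1}+3$ and the matching bookkeeping $2^n+4+(2^{n-1}-1) = 3\cdot 2^{n-1}+3$; once these coincide, cancellation is immediate. No induction is needed, since the closed form of $T_n$ already encodes the recursive cactus structure of $\Gamma_n$.
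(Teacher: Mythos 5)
Your proof is correct and follows exactly the paper's approach: the paper's own proof is the one-line statement that it suffices to apply Equation (1) of Theorem \ref{twopolynomials}, and you have simply carried out that substitution explicitly, with the exponent bookkeeping checking out ($|E(\Gamma_n)|-|V(\Gamma_n)|+1 = 3\cdot 2^{n-1}+3 = (2^n+4)+(2^{n-1}-1)$, so the powers of $1-p$ cancel).
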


\begin{proof}
It suffices to apply Equation (1) of Theorem \ref{twopolynomials}.
\end{proof}

\begin{os}\rm
Note that the existence of loops does not change the reliability polynomial; therefore  $R(\Gamma_n,p)=R(\Gamma_n^\ast,p)$, as one can directly check.
\end{os}

As regards the complexity of $\Gamma_n$, the following proposition holds.

\begin{prop}\label{pro.complexi.GRIG}
The complexity of $\Gamma_n$ is $2^{2^{n-1}-1}$.
\end{prop}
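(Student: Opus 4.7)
The plan is to derive the complexity directly from Theorem \ref{evaluations}(1), which asserts $\tau(G) = T(G;1,1)$, by substituting $x=y=1$ into the explicit formula for $T_n(x,y)$ established in Theorem \ref{thmtuttegrigorchuk}.

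Concretely, starting from
\[
T_n(x,y) = y^{2^n+4}\, x^{2^{n-1}}\, (y+x)^{2^{n-1}-1},
\]
setting $x=1$ and $y=1$ kills the $y$-factor (it becomes $1^{2^n+4}=1$) and the $x$-factor (it becomes $1^{2^{n-1}}=1$), leaving only $(1+1)^{2^{n-1}-1} = 2^{2^{n-1}-1}$. By Theorem \ref{evaluations}(1) this equals $\tau(\Gamma_n)$, which is the desired conclusion.

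As a sanity check one can also reason combinatorially, which explains why the loops and bridges disappear from the count. Since $\Gamma_n$ has a cactus structure made of $2^{n-1}$ bridges, $2^{n-1}-1$ disjoint $2$-cycles, and $2^n+4$ loops sitting on top, every spanning tree must contain all the bridges (deleting any of them disconnects the graph), cannot contain any loop, and must contain exactly one of the two edges of each $2$-cycle (including both would create a cycle, omitting both would disconnect). The number of independent binary choices is therefore $2^{2^{n-1}-1}$, agreeing with the evaluation above.

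There is no real obstacle here: the formula for $T_n$ does all the work, and the only thing to verify is the arithmetic of the specialization. If anything, the slight subtlety worth flagging is that the loops are responsible for the seemingly large factor $y^{2^n+4}$ in $T_n$ but contribute nothing at $(1,1)$, so the result for $\Gamma_n$ coincides with the one obtained from $T_n^\ast(x,y)$ in Corollary \ref{cor.tutte.grigSTAR}; this is consistent with the fact that loops are never part of a spanning tree.
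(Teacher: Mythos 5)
Your proof is correct and follows the same route as the paper: evaluate the explicit Tutte polynomial of Theorem \ref{thmtuttegrigorchuk} at $(1,1)$ via Theorem \ref{evaluations}(1), which immediately gives $2^{2^{n-1}-1}$. Your combinatorial sanity check also matches the remark the paper places right after its proof, so there is nothing to add.
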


\begin{proof}
According with Formula (1) of   Theorem \ref{evaluations}, it suffices to compute $T_n(1,1)$.
\end{proof}

\begin{os}\rm
The value of $\tau(\Gamma_n)$ has the following interpretation:
each bridge of $\Gamma_n$ must belong to any spanning subtree of
$\Gamma_n$. On the other hand, a spanning subtree of $\Gamma_n$
must contain exactly one edge of each $2$-cycle. Then the result
follows, since the number of $2$-cycles of $\Gamma_n$ is
$2^{n-1}-1$ and we have two choices for each $2$-cycle. Also observe
that loops do not contribute to $\tau(\Gamma_n)$; therefore,
$T_n(1,1)= T_n^\ast(1,1)$ and so $\tau(\Gamma_n) =
\tau(\Gamma_n^\ast)$, for each $n\geq 1$.
\end{os}

\begin{cor}
The asymptotic growth constant of the spanning trees of $\Gamma_n$
is $\frac{1}{2}\log 2$.
\end{cor}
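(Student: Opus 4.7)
The plan is simply to substitute the closed-form expression for the complexity from Proposition \ref{pro.complexi.GRIG} into the definition of the asymptotic growth constant and take the limit. There is no structural obstacle here; everything reduces to a direct computation, since both $\tau(\Gamma_n)$ and $|V(\Gamma_n)|$ are already known explicitly.

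More precisely, I would start by recording the two ingredients: $|V(\Gamma_n)| = 2^n$, which was established when the cactus structure of $\Gamma_n$ was described, and $\tau(\Gamma_n) = 2^{2^{n-1}-1}$, which is the content of Proposition \ref{pro.complexi.GRIG}. Since $|V(\Gamma_n)| = 2^n \to \infty$ as $n \to \infty$, the asymptotic growth constant is well defined as a limit (not merely a limsup).

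Next I would compute
\[
\frac{\log \tau(\Gamma_n)}{|V(\Gamma_n)|} = \frac{(2^{n-1}-1)\log 2}{2^n} = \frac{\log 2}{2} - \frac{\log 2}{2^n}.
\]
Passing to the limit as $n \to \infty$, the second term vanishes and one obtains $\tfrac{1}{2}\log 2$, as claimed. The only thing to verify is the elementary algebraic manipulation above; there is no combinatorial or analytic subtlety. If anything, the mildly interesting point is that the convergence rate to the limit is exponential in $n$, but this is not required by the statement.
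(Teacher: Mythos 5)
Your proof is correct and follows exactly the route the paper takes: substitute $\tau(\Gamma_n)=2^{2^{n-1}-1}$ from Proposition \ref{pro.complexi.GRIG} and $|V(\Gamma_n)|=2^n$ into the defining limit and compute. The paper merely omits the explicit algebra that you carry out.
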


\begin{proof}
It suffices to compute
$$
\lim_{n\to \infty}\frac{\log(\tau(\Gamma_n))}{|V(\Gamma_n)|},
$$
with $|V(\Gamma_n)|=2^n$.
\end{proof}

Evaluating $T_n(x,y)$ in $(1,2)$ provides the number of connected
spanning subgraphs of $\Gamma_n$. The following
proposition holds.

\begin{prop}\label{prop.conn.span.subgra.GRIG}
The number of connected spanning subgraphs of $\Gamma_n$ is
$2^{2^n+4}\cdot 3^{2^{n-1}-1}$.
\end{prop}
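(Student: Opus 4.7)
The plan is to invoke part (2) of Theorem \ref{evaluations}, which says that the number of connected spanning subgraphs of a connected graph $G$ equals $T(G;1,2)$. Since Theorem \ref{thmtuttegrigorchuk} already provides the explicit factorization
$$T_n(x,y) = y^{2^n+4}\, x^{2^{n-1}}\, (y+x)^{2^{n-1}-1},$$
the entire argument reduces to a substitution, with no further combinatorial analysis needed.

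Concretely, I would set $x=1$ and $y=2$ in the formula above. The loop factor $y^{2^n+4}$ becomes $2^{2^n+4}$, the bridge factor $x^{2^{n-1}}$ becomes $1^{2^{n-1}}=1$ (bridges contribute nothing at $x=1$, which matches the intuition that every bridge must belong to every connected spanning subgraph and hence offers no independent choice), and the $2$-cycle factor $(y+x)^{2^{n-1}-1}$ becomes $3^{2^{n-1}-1}$. Multiplying these together yields $2^{2^n+4}\cdot 3^{2^{n-1}-1}$, which is the claimed value.

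There is essentially no obstacle here: the result is a one-line corollary of Theorem \ref{thmtuttegrigorchuk} and Theorem \ref{evaluations}(2). The only mild sanity check worth making is that the exponent $3^{2^{n-1}-1}$ agrees with the direct combinatorial count in which each of the $2^{n-1}-1$ internal $2$-cycles of $\Gamma_n$ contributes $3$ choices (keep one edge, the other edge, or both) to a connected spanning subgraph, while the $2^{n-1}$ bridges must all be retained and each of the $2^n+4$ loops may be included or not freely (giving the factor $2^{2^n+4}$). This matches the evaluation and confirms the statement.
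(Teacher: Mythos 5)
Your proposal is correct and follows exactly the paper's own route: apply Theorem \ref{evaluations}(2) and evaluate the Tutte polynomial of Theorem \ref{thmtuttegrigorchuk} at $(1,2)$. Your concluding combinatorial sanity check is also present in the paper, as the remark immediately following the proposition.
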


\begin{proof}
It suffices to apply Formula (2) of Theorem \ref{evaluations}.
\end{proof}

\begin{os}\rm
The value that we have found for the number of connected spanning
subgraphs of $\Gamma_n$ has the following interpretation: a
connected spanning subgraph of $\Gamma_n$ necessarily contains
each bridge of the graph. On the other hand, both the edges or
only one edge of each $2$-cycle must belong to the subgraph (if no
edge of a cycle belongs to the subgraph, then this subgraph is not
connected), so that, for each of the  $2^{n-1}-1$ cycles of length 2,
we have three possibilities. Finally, a connected spanning subgraph can also contain loops and so we have two possibilities for each of the  $2^n+4$ loops.
\end{os}

Another interesting computation concerns the number of spanning
forests of $\Gamma_n$, which is given by $T_n(2,1)$.
\begin{prop}\label{prop.conn.span.forest.GRIG}
The number of spanning forests of $\Gamma_n$ is $2^{2^{n-1}}\cdot
3^{2^{n-1}-1}$.
\end{prop}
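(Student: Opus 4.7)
The plan is to apply formula (3) of Theorem \ref{evaluations}, which identifies the number of spanning forests of $\Gamma_n$ with the evaluation $T_n(2,1)$. Using the closed expression from Theorem \ref{thmtuttegrigorchuk},
$$T_n(x,y) = y^{2^n+4}\, x^{2^{n-1}}\,(y+x)^{2^{n-1}-1},$$
I would substitute $x=2$ and $y=1$. The loop contribution $y^{2^n+4}$ collapses to $1$, the bridge contribution $x^{2^{n-1}}$ becomes $2^{2^{n-1}}$, and the $2$-cycle contribution $(y+x)^{2^{n-1}-1}$ becomes $3^{2^{n-1}-1}$. Multiplying these factors yields the stated value $2^{2^{n-1}}\cdot 3^{2^{n-1}-1}$. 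The computation is a one-line substitution, so no genuine obstacle arises.

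As a sanity check that I would include as a remark after the proof, the formula admits a transparent combinatorial reading that mirrors the cactus decomposition of $\Gamma_n$: a spanning forest must contain no cycle at all, so each of the $2^n+4$ loops is necessarily excluded (contributing a factor $1$); each of the $2^{n-1}$ bridges may independently be included or excluded without creating a cycle (contributing $2^{2^{n-1}}$); and on each of the $2^{n-1}-1$ $2$-cycles exactly three acyclic edge-selections are available -- omit both edges, keep only the first, or keep only the second -- contributing $3^{2^{n-1}-1}$. This independent count matches the substitution above and explains why precisely the factors $x=2$ and $y+x=3$ survive the evaluation.
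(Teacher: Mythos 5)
Your proof is correct and follows exactly the paper's route: evaluate the Tutte polynomial of Theorem \ref{thmtuttegrigorchuk} at $(2,1)$ via Formula (3) of Theorem \ref{evaluations}, obtaining $1\cdot 2^{2^{n-1}}\cdot 3^{2^{n-1}-1}$. Your appended combinatorial sanity check likewise coincides with the remark the paper places immediately after its proof (bridges free, loops forbidden, three choices per $2$-cycle).
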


\begin{proof}
It suffices to apply Formula (3) of Theorem \ref{evaluations}.
\end{proof}

\begin{os}\rm
The value that we have found for the number of spanning forests of
$\Gamma_n$ has the following interpretation: a spanning forest of
$\Gamma_n$ cannot contain loops nor both the edges of a $2$-cycle,
since this would produce a cycle. Therefore, no edges or only one
edge of each $2$-cycle must belong to the forest. On the other
hand, each bridge can belong to a spanning forest of $\Gamma_n$.
Since the number of $2$-cycles is $2^{n-1}-1$ and the number of
bridges is $2^{n-1}$, we get the result.
\end{os}

Next, we explicitly verify that by evaluating the Tutte polynomial
of $\Gamma_n$ in $(2,2)$ one gets $2^{|E(\Gamma_n)|}$ (see Formula
(4) of Theorem \ref{evaluations}).

\begin{prop}
For each $n\geq 1$, one has $T_n(2,2)=
2^{|E(\Gamma_{n})|}=2^{5\cdot 2^{n-1}+2}$.
\end{prop}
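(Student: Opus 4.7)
The plan is to invoke Theorem \ref{thmtuttegrigorchuk}, substitute $x=y=2$ into the closed form for $T_n(x,y)$, and collect exponents. Concretely, from
$$T_n(x,y) = y^{2^n+4}\, x^{2^{n-1}}\, (y+x)^{2^{n-1}-1}$$
evaluated at $(2,2)$, I would write the factor $(y+x)^{2^{n-1}-1}$ at $(2,2)$ as $4^{2^{n-1}-1} = 2^{2(2^{n-1}-1)} = 2^{2^n-2}$, so that
$$T_n(2,2) = 2^{2^n+4}\cdot 2^{2^{n-1}}\cdot 2^{2^n-2} = 2^{(2^n+4)+2^{n-1}+(2^n-2)} = 2^{2^{n+1}+2^{n-1}+2}.$$

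Then I would verify the identity $2^{n+1} + 2^{n-1} + 2 = 5\cdot 2^{n-1} + 2$, which follows from $2^{n+1} + 2^{n-1} = 4\cdot 2^{n-1} + 2^{n-1} = 5\cdot 2^{n-1}$. This matches the expression for $|E(\Gamma_n)|$ recorded immediately before Theorem \ref{thmtuttegrigorchuk}, namely $|E(\Gamma_n)|=5\cdot 2^{n-1}+2$, so $T_n(2,2) = 2^{|E(\Gamma_n)|}$ as required, in agreement with Formula (4) of Theorem \ref{evaluations}.

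There is no real obstacle here: the computation is a direct bookkeeping of exponents of $2$, and the only thing to watch is that the three contributions (loops, bridges, and $2$-cycles) combine to give exactly the total edge count. In fact the identity can be read off structurally from the cactus description: each of the $2^n+4$ loops contributes a factor $y=2$, each of the $2^{n-1}$ bridges contributes a factor $x=2$, and each of the $2^{n-1}-1$ copies of $C_2$ contributes $(y+x)|_{(2,2)} = 4 = 2^2$, which accounts for the two edges of that cycle; summing $1\cdot(2^n+4) + 1\cdot 2^{n-1} + 2\cdot(2^{n-1}-1)$ recovers $|E(\Gamma_n)|$. This observation could be used as an even shorter conceptual proof, but the algebraic verification above is entirely sufficient.
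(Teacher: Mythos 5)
Your proof is correct and follows exactly the paper's own argument: substitute $x=y=2$ into the closed form of $T_n(x,y)$ from Theorem \ref{thmtuttegrigorchuk} and collect the exponents of $2$ to obtain $2^{5\cdot 2^{n-1}+2}$. The extra structural remark about loops, bridges, and $2$-cycles is a nice sanity check but does not change the method.
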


\begin{proof}
By definition of $T_n(x,y)$, one has:
\begin{eqnarray*}
T_n(2,2) =2^{2^n+4}\cdot  2^{2^{n-1}} \cdot 4^{2^{n-1}-1} = 2^{5\cdot
2^{n-1}+2}.
\end{eqnarray*}
\end{proof}

Finally, by evaluating the Tutte polynomial of $\Gamma_n$ in
$(2,0)$, we investigate the number of acyclic orientations of
$\Gamma_n$. Observe that, whenever we have loops, the number of
possible acyclic orientations on the graphs is $0$. Therefore, we
consider the graphs $\{\Gamma_n^\ast\}_{n \geq 1 }$ without loops, whose Tutte polynomial is $T_n^{\ast}(x,y) = x^{2^{n-1}}
(x+y)^{2^{n-1}-1}$.

\begin{prop}\label{prop.acyclic.orien.GRIG}
The number of acyclic orientations on $\Gamma_n^{\ast}$ is
$2^{2^n-1}$.
\end{prop}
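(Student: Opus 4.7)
The plan is to apply Formula (5) of Theorem \ref{evaluations} to the explicit expression of the Tutte polynomial $T_n^\ast(x,y)$ given in Corollary \ref{cor.tutte.grigSTAR}. Concretely, one substitutes $(x,y)=(2,0)$ into
$$
T_n^\ast(x,y)=x^{2^{n-1}}(y+x)^{2^{n-1}-1},
$$
obtaining $2^{2^{n-1}}\cdot 2^{2^{n-1}-1}=2^{2^{n-1}+2^{n-1}-1}=2^{2^n-1}$. This is a one-line computation once one invokes the two results already stated, so there is no real obstacle.

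As a sanity check, the answer admits a transparent combinatorial interpretation. The graph $\Gamma_n^\ast$ has a cactus structure formed by $2^{n-1}$ bridges and $2^{n-1}-1$ two-cycles, all joined in a linear fashion at single vertices. By the multiplicative property \eqref{prodotto} applied with $(x,y)=(2,0)$, the number of acyclic orientations factors as a product over the bridge-blocks and over the $2$-cycle-blocks. Each bridge contributes a factor $2$ (the two possible orientations), while each $2$-cycle $C_2$ contributes a factor $T(C_2;2,0)=0+2=2$, corresponding to the two acyclic orientations in which the two parallel edges are oriented in the same direction (the other two orientations create an oriented $2$-cycle). Multiplying gives $2^{2^{n-1}}\cdot 2^{2^{n-1}-1}=2^{2^n-1}$, matching the evaluation above.

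The main step is thus only the numerical substitution; the only care needed is to use $T_n^\ast$ (and not $T_n$) since, as already noted in the paragraph preceding the statement, any graph carrying a loop has no acyclic orientation, so $T_n(2,0)=0$ trivially.
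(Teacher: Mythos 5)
Your proof is correct and coincides with the paper's own argument: both simply evaluate $T_n^{\ast}(2,0)=2^{2^{n-1}}\cdot 2^{2^{n-1}-1}=2^{2^n-1}$ using Corollary \ref{cor.tutte.grigSTAR} and Formula (5) of Theorem \ref{evaluations}. Your combinatorial sanity check (bridges contribute $2$, each $2$-cycle contributes $T(C_2;2,0)=2$) is exactly the interpretation the paper gives in the remark immediately following the proposition.
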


\begin{proof}
By definition of $T_n^{\ast}(x,y)$, one has:
\begin{eqnarray*}
T_n^{\ast}(2,0) = 2^{2^{n-1}}\cdot2^{2^{n-1}-1}=2^{2^{n}-1}.
\end{eqnarray*}
\end{proof}

\begin{os}\rm
The value that we have found for the number of acyclic
orientations of $\Gamma_n^{\ast}$ has the following
interpretation: we have two possible orientations for each bridge,
giving the factor $2^{2^{n-1}}$. Then, each $2$-cycle can
receive four orientations, as shown in the following picture.
\begin{center}
\begin{picture}(300,8)
\letvertex A=(10,4)\letvertex B=(50,4)
\drawvertex(A){$\bullet$}\drawvertex(B){$\bullet$}
\drawcurvededge(A,B){} \drawcurvededge[b](A,B){}

\letvertex AA=(90,4)\letvertex BB=(130,4)
\drawvertex(AA){$\bullet$}\drawvertex(BB){$\bullet$}
\drawcurvededge(BB,AA){} \drawcurvededge[b](BB,AA){}

\letvertex a=(170,4)\letvertex b=(210,4)
\drawvertex(a){$\bullet$}\drawvertex(b){$\bullet$}
\drawcurvededge(a,b){} \drawcurvededge(b,a){}

\letvertex aA=(250,4)\letvertex bB=(290,4)
\drawvertex(aA){$\bullet$}\drawvertex(bB){$\bullet$}
\drawcurvededge[b](aA,bB){} \drawcurvededge[b](bB,aA){}
\end{picture}
\end{center}
Only the first two orientations are acyclic, and so the $2$-cycles
give a contribution equal to $2^{2^{n-1}-1}$.
\end{os}

Since $\Gamma_n$ has loops, it does not admit any proper coloring,
so that we investigate the chromatic polynomial of
$\Gamma_n^{\ast}$.

\begin{prop}\label{prop.chromatic poly.GRIG}
For each $n\geq 1$, the chromatic polynomial $\chi_n(\lambda)$ of
$\Gamma_n^{\ast}$ is
$$
\chi_n(\lambda) = -\lambda(1-\lambda)^{2^n-1}.
$$
\end{prop}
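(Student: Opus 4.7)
The plan is to apply Formula (2) of Theorem \ref{twopolynomials} directly to $\Gamma_n^{\ast}$, whose Tutte polynomial is already known from Corollary \ref{cor.tutte.grigSTAR}.

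First I would record the two basic graph parameters needed in the formula. Since $\Gamma_n^{\ast}$ is obtained from the connected graph $\Gamma_n$ by deleting loops only (which does not affect connectivity), we have $k(\Gamma_n^{\ast}) = 1$. Moreover $|V(\Gamma_n^{\ast})| = 2^n$, so $r(\Gamma_n^{\ast}) = |V(\Gamma_n^{\ast})| - k(\Gamma_n^{\ast}) = 2^n - 1$.

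Next I would specialize Corollary \ref{cor.tutte.grigSTAR} at $(x,y) = (1-\lambda, 0)$:
\[
T_n^{\ast}(1-\lambda, 0) = (1-\lambda)^{2^{n-1}} \bigl((1-\lambda) + 0\bigr)^{2^{n-1}-1} = (1-\lambda)^{2^n - 1}.
\]
Plugging this, together with $r(\Gamma_n^{\ast}) = 2^n - 1$ and $k(\Gamma_n^{\ast}) = 1$, into part (2) of Theorem \ref{twopolynomials}, we obtain
\[
\chi_n(\lambda) = (-1)^{2^n - 1} \lambda \, (1-\lambda)^{2^n - 1}.
\]
Finally, since $2^n - 1$ is odd for every $n \geq 1$, the sign $(-1)^{2^n-1}$ equals $-1$, which yields exactly the claimed expression $\chi_n(\lambda) = -\lambda (1-\lambda)^{2^n-1}$.

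There is no real obstacle here: the argument is a direct plug-in into Theorem \ref{twopolynomials}(2) once the Tutte polynomial of $\Gamma_n^{\ast}$ is available. The only minor point of care is to correctly evaluate the rank $r(\Gamma_n^{\ast})$ and to track the parity of $2^n - 1$ so that the overall sign comes out right. As a sanity check, one may note the combinatorial interpretation: $\Gamma_n^{\ast}$ is a linear cactus of $2^{n-1}$ bridges and $2^{n-1}-1$ digons, so a proper $\lambda$-coloring is determined by choosing a color for one endpoint ($\lambda$ ways) and then, independently along each of the $2^n - 1$ consecutive ``edge-blocks,'' a distinct color for the next vertex ($\lambda - 1$ ways each), giving $\lambda(\lambda - 1)^{2^n - 1}$, which coincides with $-\lambda(1-\lambda)^{2^n - 1}$.
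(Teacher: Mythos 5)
Your proof is correct and follows exactly the paper's own route: a direct application of Theorem \ref{twopolynomials}(2) to the Tutte polynomial from Corollary \ref{cor.tutte.grigSTAR}, with $r(\Gamma_n^{\ast})=2^n-1$ supplying the sign. The added combinatorial sanity check (the coloring constraints of the linear cactus coincide with those of a path on $2^n$ vertices) is a nice touch but not part of the paper's argument.
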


\begin{proof}
By applying Equation (2) of Theorem \ref{twopolynomials}, one gets
\begin{eqnarray*}
\chi_n(\lambda)=
(-1)^{2^n-1}\lambda(1-\lambda)^{2^{n-1}}(1-\lambda)^{2^{n-1}-1}=-\lambda(1-\lambda)^{2^n-1}.
\end{eqnarray*}
\end{proof}

\begin{os}\rm
Note that $\chi_n(2)=2$, for each $n\geq 1$, according to the fact
that the graph is bipartite and so uniquely $2$-colorable.
\end{os}

We end this section by investigating the relationship between the
evaluation of the Tutte polynomial of the Schreier graph
$\Gamma_n^{\ast}$ on the hyperbola $(x-1)(y-1)=2$ and the
partition function of the Ising model on the same graph. In
\cite[Theorem 2.1]{noiising}, the partition function of the Ising
model on $\Gamma_n^{\ast}$ has been described as
$$
Z_n=2^{2^n} \cosh(\beta J)^{3\cdot 2^{n-1}-2}
\left(1+\tanh^2\left(\beta J\right)\right)^{2^{n-1}-1}.
$$

\begin{teo}\label{thmisinggrig}
For each $n\geq 1$, one has
\begin{eqnarray}\label{isingbase}
2(e^{2\beta J}-1)^{|V(\Gamma_n^{\ast})|-1}\cdot e^{-\beta
J|E(\Gamma_n^{\ast})|}\cdot T_n^{\ast}\left(\frac{e^{2\beta
J}+1}{e^{2\beta J}-1},e^{2\beta J}\right) = Z_n.
\end{eqnarray}
\end{teo}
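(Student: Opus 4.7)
The plan is to reduce the identity to a direct algebraic verification, exploiting the explicit factorized form of $T_n^{\ast}(x,y)$ obtained in Corollary \ref{cor.tutte.grigSTAR}. Setting $u := e^{2\beta J}$ is the natural change of variable: then the point on the hyperbola becomes $x = \frac{u+1}{u-1}$, $y = u$, and one computes immediately
\[
x+y \;=\; \frac{u+1}{u-1} + u \;=\; \frac{u^2+1}{u-1}.
\]
Inserting these into $T_n^{\ast}(x,y) = x^{2^{n-1}}(x+y)^{2^{n-1}-1}$ collects all the $(u-1)$'s into the denominator with total exponent $2^{n-1} + (2^{n-1}-1) = 2^n-1$.

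The crucial observation is that this denominator exactly matches the prefactor $(u-1)^{|V(\Gamma_n^{\ast})|-1} = (u-1)^{2^n-1}$ on the left-hand side of \eqref{isingbase}. Hence after cancellation the left-hand side of \eqref{isingbase} becomes
\[
2\,e^{-\beta J(3\cdot 2^{n-1}-2)}\,(u+1)^{2^{n-1}}\,(u^2+1)^{2^{n-1}-1}.
\]
The second step is to convert $u+1$ and $u^2+1$ into the hyperbolic quantities appearing in $Z_n$. Directly from the definitions one has
\[
u+1 \;=\; 2e^{\beta J}\cosh(\beta J), \qquad 1+\tanh^2(\beta J) \;=\; \frac{2(u^2+1)}{(u+1)^2},
\]
so that $u^2+1 = \tfrac{1}{2}(u+1)^2\bigl(1+\tanh^2(\beta J)\bigr)$, and consequently
\[
(u+1)^{2^{n-1}}(u^2+1)^{2^{n-1}-1} \;=\; 2^{-(2^{n-1}-1)}(u+1)^{3\cdot 2^{n-1}-2}\bigl(1+\tanh^2(\beta J)\bigr)^{2^{n-1}-1}.
\]

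The final step is a routine bookkeeping of the three kinds of factors (powers of $2$, of $e^{\beta J}$, and of $\cosh(\beta J)$). Substituting $u+1 = 2e^{\beta J}\cosh(\beta J)$ above and regrouping, the powers of $2$ add up to $1 + (3\cdot 2^{n-1}-2) - (2^{n-1}-1) = 2^n$, the powers of $e^{\beta J}$ add up to $-(3\cdot 2^{n-1}-2) + (3\cdot 2^{n-1}-2) = 0$, and the power of $\cosh(\beta J)$ is $3\cdot 2^{n-1}-2$. Matching these against the formula
\[
Z_n \;=\; 2^{2^n}\cosh(\beta J)^{3\cdot 2^{n-1}-2}\bigl(1+\tanh^2(\beta J)\bigr)^{2^{n-1}-1}
\]
from \cite[Theorem 2.1]{noiising} completes the verification. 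No genuine obstacle arises: the only delicate point is confirming that the exponent of $(u-1)$ produced by $T_n^{\ast}$ at the hyperbola equals $|V(\Gamma_n^{\ast})|-1$, which is precisely why the Tutte--Ising correspondence on the hyperbola $(x-1)(y-1)=2$ is built this way, and the rest is accounting.
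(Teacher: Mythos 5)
Your proposal is correct and follows essentially the same route as the paper: both substitute the hyperbola point into the explicit factorized form $T_n^{\ast}(x,y)=x^{2^{n-1}}(x+y)^{2^{n-1}-1}$, cancel the $(e^{2\beta J}-1)^{2^n-1}$ prefactor, and reduce the identity to elementary bookkeeping against the known formula for $Z_n$. The only cosmetic difference is that the paper brings both sides to a common intermediate expression in $t=e^{\beta J}$, while you push the left-hand side all the way into the hyperbolic form of $Z_n$; all your intermediate identities and exponent counts check out.
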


\begin{proof}
Recall that $|E(\Gamma_n^{\ast})| = 3\cdot 2^{n-1}-2$ and
$|V(\Gamma_n^{\ast})|= 2^n$. Let $e^{\beta J}=t$, so that Equation
\eqref{isingbase} can be written as
\begin{eqnarray}\label{equalityising}
\frac{2(t^2-1)^{2^n-1}}{t^{3\cdot
2^{n-1}-2}}\cdot T_n^{\ast}\left(\frac{t^2+1}{t^2-1},t^2\right)=
2^{2^n} \left(\frac{t^2+1}{2t}\right)^{3\cdot 2^{n-1}-2 }
\left(1+ \left(\frac{t^2-1}{t^2+1}\right)^2\right)^{2^{n-1}-1}.
\end{eqnarray}
One can directly check that
$$
T_n^{\ast}\left(\frac{t^2+1}{t^2-1},t^2\right)=
\frac{(t^2+1)^{2^{n-1}}(t^4+1)^{2^{n-1}-1}}{(t^2-1)^{2^n-1}}.
$$
Then, it is not difficult to prove that both sides of Equation
\eqref{equalityising} are equal to
$$
\frac{2(t^2+1)^{2^{n-1}}(t^4+1)^{2^{n-1}-1}}{t^{3\cdot
2^{n-1}-2}}.
$$
\end{proof}

\section{The Tutte polynomial of the Schreier graphs of the Basilica group}\label{sezio Basilica}

The Basilica group is a self-similar group of automorphisms of the
rooted binary tree generated by the elements
$$
a=e(b,id), \qquad b=\epsilon(a,id).
$$
The associated Schreier graphs can be recursively constructed via
the following substitutional rules:
\begin{center}
\begin{picture}(400,128)

\put(97,120){Rule I}\put(177,120){Rule II}\put(275,120){Rule III}

\letvertex A=(110,100)\letvertex B=(90,20)\letvertex C=(130,20)

\letvertex D=(170,100)\letvertex E=(210,100)\letvertex F=(170,20)\letvertex G=(210,20)
\letvertex H=(270,100)\letvertex I=(310,100)\letvertex L=(250,10)\letvertex M=(290,20)\letvertex N=(330,10)

\put(107,60){$\Downarrow$}\put(187,60){$\Downarrow$}\put(287,60){$\Downarrow$}

\put(107,90){$1w$}\put(87,8){$11w$}\put(127,8){$01w$}

\put(167,92){$u$}\put(207,92){$v$}\put(167,11){$0u$}\put(207,11){$0v$}

\put(267,90){$0u$}\put(307,90){$0v$}
\put(247,1){$00u$}\put(286,10){$10v$}\put(327,1){$00v$}


\drawvertex(A){$\bullet$}\drawvertex(B){$\bullet$}
\drawvertex(C){$\bullet$}\drawvertex(D){$\bullet$}
\drawvertex(E){$\bullet$}\drawvertex(F){$\bullet$}
\drawvertex(G){$\bullet$}\drawvertex(H){$\bullet$}
\drawvertex(I){$\bullet$}\drawvertex(L){$\bullet$}
\drawvertex(M){$\bullet$}\drawvertex(N){$\bullet$}

\drawundirectedloop(A){$a$}\drawundirectedloop[l](B){$a$}
\drawundirectedcurvededge(B,C){$b$}\drawundirectedcurvededge(C,B){$b$}

\drawundirectededge(D,E){$b$} \drawundirectededge(F,G){$a$}
\drawundirectededge(H,I){$a$}
\drawundirectedcurvededge(L,M){$b$}\drawundirectedloop(M){$a$}
\drawundirectedcurvededge(M,N){$b$}
\end{picture}
\end{center}
The starting point is the Schreier graph $\B_1$ of the first
level:
\begin{center}
\begin{picture}(200,20)
\letvertex A=(70,12)\letvertex B=(130,12)

\put(67,3){$0$}\put(127,3){$1$}\put(20,8){$\B_1$}

\drawvertex(A){$\bullet$}\drawvertex(B){$\bullet$}

\drawundirectedloop[l](A){$a$}\drawundirectedloop[r](B){$a$}
\drawundirectedcurvededge(A,B){$b$}\drawundirectedcurvededge(B,A){$b$}
\end{picture}
\end{center}

As in the case of the Grigorchuk group, we are interested in the
unlabelled Schreier graphs. The following pictures of graphs $\B_n$
for $n=1,2,3,4,5,6$ give an idea of how Schreier graphs of
the Basilica group look like. See \cite{schreierbasilica} for a
comprehensive analysis of finite and infinite Schreier graphs of
this group. Note also that $\{\B_n\}_{n\geq 1}$ is an
approximating sequence for the Julia set of the polynomial
$z^2-1$, the famous \lq\lq Basilica\rq\rq fractal (see
\cite{volo}). \unitlength=0.28mm
\begin{center}
\begin{picture}(300,25)
\letvertex A=(30,15)\letvertex B=(70,15)\letvertex C=(150,15)\letvertex D=(190,15)
\letvertex E=(230,15)\letvertex F=(270,15)

\drawvertex(A){$\bullet$}\drawvertex(B){$\bullet$}
\drawvertex(C){$\bullet$}\drawvertex(D){$\bullet$}
\drawvertex(E){$\bullet$}\drawvertex(F){$\bullet$}

\drawundirectedcurvededge(A,B){} \drawundirectedcurvededge(B,A){}
\drawundirectedcurvededge(C,D){} \drawundirectedcurvededge(D,C){}
\drawundirectedcurvededge(D,E){} \drawundirectedcurvededge(E,D){}
\drawundirectedcurvededge(E,F){} \drawundirectedcurvededge(F,E){}
\put(-5,12){$\B_1$} \put(295,12){$\B_2$}

\drawundirectedloop[l](A){}\drawundirectedloop[r](B){}\drawundirectedloop[l](C){}
\drawundirectedloop[r](F){}
\end{picture}
\end{center}

\begin{center}
\begin{picture}(300,57)
\letvertex A=(50,30)\letvertex B=(90,30)\letvertex C=(130,30)\letvertex D=(150,50)
\letvertex E=(150,10)\letvertex F=(170,30)\letvertex G=(210,30)\letvertex H=(250,30)

\drawvertex(A){$\bullet$}\drawvertex(B){$\bullet$}
\drawvertex(C){$\bullet$}\drawvertex(D){$\bullet$}
\drawvertex(E){$\bullet$}\drawvertex(F){$\bullet$}
\drawvertex(G){$\bullet$}\drawvertex(H){$\bullet$}

\drawundirectededge(C,D){} \drawundirectededge(E,C){}
\drawundirectededge(F,E){} \drawundirectededge(D,F){}

\drawundirectedcurvededge(A,B){} \drawundirectedcurvededge(B,A){}
\drawundirectedcurvededge(B,C){} \drawundirectedcurvededge(C,B){}
\drawundirectedcurvededge(F,G){} \drawundirectedcurvededge(G,F){}
\drawundirectedcurvededge(G,H){} \drawundirectedcurvededge(H,G){}
\put(20,27){$\B_3$}
\drawundirectedloop[l](A){}\drawundirectedloop(D){}\drawundirectedloop[b](E){}
\drawundirectedloop[r](H){}
\end{picture}
\end{center}
\vspace{0,4cm}
\begin{center}
\begin{picture}(300,140)
\put(-20,67){$\B_4$}

\letvertex A=(10,70)
\letvertex B=(50,70)\letvertex C=(90,70)\letvertex D=(110,90)\letvertex E=(110,50)
\letvertex F=(130,70)\letvertex G=(150,90)\letvertex H=(150,50)\letvertex I=(170,70)
\letvertex J=(150,130)\letvertex K=(150,10)
\letvertex L=(190,90)
\letvertex M=(190,50)\letvertex N=(210,70)
\letvertex O=(250,70)\letvertex P=(290,70)

\drawvertex(A){$\bullet$}\drawvertex(B){$\bullet$}
\drawvertex(C){$\bullet$}\drawvertex(D){$\bullet$}
\drawvertex(E){$\bullet$}\drawvertex(F){$\bullet$}
\drawvertex(G){$\bullet$}\drawvertex(H){$\bullet$}
\drawvertex(I){$\bullet$}\drawvertex(L){$\bullet$}
\drawvertex(M){$\bullet$}\drawvertex(N){$\bullet$}
\drawvertex(O){$\bullet$}\drawvertex(P){$\bullet$}
\drawvertex(J){$\bullet$}\drawvertex(K){$\bullet$}

\drawundirectedcurvededge(A,B){}\drawundirectedcurvededge(B,A){}
\drawundirectedcurvededge(B,C){}\drawundirectedcurvededge(C,B){}
\drawundirectededge(C,D){} \drawundirectededge(D,F){}
\drawundirectededge(F,E){} \drawundirectededge(E,C){}
\drawundirectededge(F,G){} \drawundirectededge(G,I){}
\drawundirectededge(I,H){} \drawundirectededge(H,F){}
\drawundirectededge(I,L){} \drawundirectededge(L,N){}
\drawundirectededge(N,M){} \drawundirectededge(M,I){}
\drawundirectedcurvededge(G,J){}\drawundirectedcurvededge(J,G){}
\drawundirectedcurvededge(H,K){}\drawundirectedcurvededge(K,H){}
\drawundirectedcurvededge(N,O){}\drawundirectedcurvededge(O,N){}
\drawundirectedcurvededge(O,P){}\drawundirectedcurvededge(P,O){}
\drawundirectedloop[l](A){}\drawundirectedloop(D){}\drawundirectedloop[b](E){}
\drawundirectedloop(J){}\drawundirectedloop[b](K){}\drawundirectedloop(L){}\drawundirectedloop[b](M){}
\drawundirectedloop[r](P){}
\end{picture}
\end{center}
\vspace{0,3cm}
\begin{center}
\begin{picture}(400,210)
\put(20,60){$\B_5$}
\letvertex A=(0,110)\letvertex B=(40,110)\letvertex C=(80,110)\letvertex D=(100,130)
\letvertex E=(100,90)\letvertex F=(120,110)\letvertex G=(140,130)\letvertex H=(140,160)
\letvertex I=(160,110)\letvertex L=(140,90)\letvertex M=(140,60)\letvertex N=(170,140)
\letvertex O=(200,150)\letvertex R=(230,140)\letvertex S=(240,110)\letvertex T=(230,80)
\letvertex U=(200,70)\letvertex V=(170,80)\letvertex P=(200,180)\letvertex Q=(200,210)
\letvertex Z=(200,40)\letvertex J=(200,10)\letvertex K=(260,130)\letvertex X=(280,110)
\letvertex W=(260,90)\letvertex g=(260,160)\letvertex h=(260,60)\letvertex c=(300,130)
\letvertex Y=(300,90)\letvertex d=(320,110)\letvertex e=(360,110)\letvertex f=(400,110)

\drawvertex(A){$\bullet$}\drawvertex(B){$\bullet$}
\drawvertex(C){$\bullet$}\drawvertex(D){$\bullet$}
\drawvertex(E){$\bullet$}\drawvertex(F){$\bullet$}
\drawvertex(G){$\bullet$}\drawvertex(H){$\bullet$}
\drawvertex(I){$\bullet$}\drawvertex(L){$\bullet$}
\drawvertex(M){$\bullet$}\drawvertex(N){$\bullet$}
\drawvertex(O){$\bullet$}\drawvertex(P){$\bullet$}
\drawvertex(J){$\bullet$}\drawvertex(K){$\bullet$}
\drawvertex(Q){$\bullet$}\drawvertex(R){$\bullet$}
\drawvertex(S){$\bullet$}\drawvertex(T){$\bullet$}
\drawvertex(U){$\bullet$}\drawvertex(V){$\bullet$}
\drawvertex(W){$\bullet$}\drawvertex(X){$\bullet$}
\drawvertex(Y){$\bullet$}\drawvertex(Z){$\bullet$}
\drawvertex(g){$\bullet$}\drawvertex(h){$\bullet$}
\drawvertex(c){$\bullet$}\drawvertex(f){$\bullet$}
\drawvertex(d){$\bullet$}\drawvertex(e){$\bullet$}

\drawundirectedcurvededge(A,B){}\drawundirectedcurvededge(B,A){}
\drawundirectedcurvededge(B,C){}\drawundirectedcurvededge(C,B){}
\drawundirectededge(C,D){} \drawundirectededge(D,F){}
\drawundirectededge(F,E){} \drawundirectededge(E,C){}

\drawundirectededge(F,G){} \drawundirectededge(G,I){}
\drawundirectededge(I,L){} \drawundirectededge(L,F){}

\drawundirectedcurvededge(G,H){}\drawundirectedcurvededge(H,G){}
\drawundirectedcurvededge(L,M){}\drawundirectedcurvededge(M,L){}

\drawundirectededge(I,N){} \drawundirectededge(N,O){}
\drawundirectededge(O,R){} \drawundirectededge(R,S){}
\drawundirectededge(S,T){} \drawundirectededge(T,U){}
\drawundirectededge(U,V){} \drawundirectededge(V,I){}

\drawundirectedcurvededge(O,P){}\drawundirectedcurvededge(P,O){}
\drawundirectedcurvededge(Q,P){}\drawundirectedcurvededge(P,Q){}

\drawundirectedcurvededge(U,Z){}\drawundirectedcurvededge(Z,U){}
\drawundirectedcurvededge(Z,J){}\drawundirectedcurvededge(J,Z){}

\drawundirectededge(S,K){} \drawundirectededge(K,X){}
\drawundirectededge(X,W){} \drawundirectededge(W,S){}
\drawundirectededge(X,c){} \drawundirectededge(c,d){}
\drawundirectededge(d,Y){} \drawundirectededge(Y,X){}

\drawundirectedcurvededge(d,e){}\drawundirectedcurvededge(e,d){}
\drawundirectedcurvededge(e,f){}\drawundirectedcurvededge(f,e){}
\drawundirectedcurvededge(K,g){}\drawundirectedcurvededge(g,K){}
\drawundirectedcurvededge(W,h){}\drawundirectedcurvededge(h,W){}

\drawundirectedloop[l](A){}\drawundirectedloop(D){}\drawundirectedloop[b](E){}
\drawundirectedloop(H){}\drawundirectedloop[b](M){}\drawundirectedloop(Q){}\drawundirectedloop[b](J){}
\drawundirectedloop(g){}\drawundirectedloop[b](h){}\drawundirectedloop(c){}\drawundirectedloop[b](Y){}
\drawundirectedloop[r](f){}\drawundirectedloop[b](V){}\drawundirectedloop(N){}\drawundirectedloop[b](T){}
\drawundirectedloop(R){}
\end{picture}
\end{center}

\begin{center}
\begin{turn}{90}
\begin{picture}(400,540)

\put(100,480){\begin{rotate}{270}$\B_6$
\end{rotate}}

\letvertex A=(200,0)\letvertex B=(200,35)\letvertex C=(200,70)\letvertex D=(180,90)
\letvertex E=(220,90)\letvertex F=(200,110)\letvertex G=(180,130)\letvertex H=(155,130)
\letvertex I=(220,130)\letvertex L=(245,130)\letvertex M=(200,150)\letvertex N=(170,160)
\letvertex O=(160,190)\letvertex P=(170,220)\letvertex Q=(200,230)\letvertex R=(230,220)
\letvertex S=(240,190)\letvertex T=(230,160)\letvertex U=(135,190)\letvertex V=(110,190)
\letvertex Z=(265,190)\letvertex J=(290,190)\letvertex K=(170,240)\letvertex X=(150,232)
\letvertex W=(120,270)\letvertex Y=(140,250)
\letvertex a=(95,270)\letvertex b=(70,270)\letvertex c=(140,290)\letvertex d=(160,270)
\letvertex e=(150,308)\letvertex f=(170,300)\letvertex g=(200,310)\letvertex h=(230,300)
\letvertex i=(250,308)\letvertex l=(240,270)\letvertex m=(230,240)\letvertex n=(250,232)
\letvertex o=(260,250)\letvertex p=(260,290)\letvertex q=(280,270)\letvertex r=(305,270)
\letvertex s=(330,270)\letvertex t=(170,320)\letvertex u=(160,350)\letvertex v=(170,380)
\letvertex z=(200,390)\letvertex j=(265,350)\letvertex k=(240,350)\letvertex x=(230,380)
\letvertex w=(290,350)\letvertex y=(230,320)
\letvertex AA=(135,350)\letvertex BB=(110,350)\letvertex CC=(155,410)\letvertex DD=(180,410)
\letvertex EE=(220,410)\letvertex FF=(245,410)\letvertex GG=(200,430)\letvertex HH=(180,450)
\letvertex II=(220,450)\letvertex LL=(200,470)\letvertex MM=(200,505)\letvertex NN=(200,540)

\drawvertex(A){$\bullet$}\drawvertex(B){$\bullet$}
\drawvertex(C){$\bullet$}\drawvertex(D){$\bullet$}
\drawvertex(E){$\bullet$}\drawvertex(F){$\bullet$}
\drawvertex(G){$\bullet$}\drawvertex(H){$\bullet$}
\drawvertex(I){$\bullet$}\drawvertex(L){$\bullet$}
\drawvertex(M){$\bullet$}\drawvertex(N){$\bullet$}
\drawvertex(O){$\bullet$}\drawvertex(P){$\bullet$}
\drawvertex(J){$\bullet$}\drawvertex(K){$\bullet$}
\drawvertex(Q){$\bullet$}\drawvertex(R){$\bullet$}
\drawvertex(S){$\bullet$}\drawvertex(T){$\bullet$}
\drawvertex(U){$\bullet$}\drawvertex(V){$\bullet$}
\drawvertex(W){$\bullet$}\drawvertex(X){$\bullet$}
\drawvertex(Y){$\bullet$}\drawvertex(Z){$\bullet$}

\drawvertex(a){$\bullet$}\drawvertex(b){$\bullet$}
\drawvertex(c){$\bullet$}\drawvertex(d){$\bullet$}
\drawvertex(e){$\bullet$}\drawvertex(f){$\bullet$}
\drawvertex(g){$\bullet$}\drawvertex(h){$\bullet$}
\drawvertex(i){$\bullet$}\drawvertex(l){$\bullet$}
\drawvertex(m){$\bullet$}\drawvertex(n){$\bullet$}
\drawvertex(o){$\bullet$}\drawvertex(p){$\bullet$}
\drawvertex(j){$\bullet$}\drawvertex(k){$\bullet$}
\drawvertex(q){$\bullet$}\drawvertex(r){$\bullet$}
\drawvertex(s){$\bullet$}\drawvertex(t){$\bullet$}
\drawvertex(u){$\bullet$}\drawvertex(v){$\bullet$}
\drawvertex(w){$\bullet$}\drawvertex(x){$\bullet$}
\drawvertex(y){$\bullet$}\drawvertex(z){$\bullet$}

\drawvertex(AA){$\bullet$}\drawvertex(BB){$\bullet$}
\drawvertex(CC){$\bullet$}\drawvertex(DD){$\bullet$}
\drawvertex(EE){$\bullet$}\drawvertex(FF){$\bullet$}
\drawvertex(GG){$\bullet$}\drawvertex(HH){$\bullet$}
\drawvertex(II){$\bullet$}\drawvertex(LL){$\bullet$}
\drawvertex(MM){$\bullet$}\drawvertex(NN){$\bullet$}

\drawundirectedcurvededge(A,B){}\drawundirectedcurvededge(B,A){}
\drawundirectedcurvededge(B,C){}\drawundirectedcurvededge(C,B){}

\drawundirectedcurvededge[r](H,G){}\drawundirectedcurvededge(G,H){}
\drawundirectedcurvededge[r](I,L){}\drawundirectedcurvededge(L,I){}
\drawundirectedcurvededge(V,U){}\drawundirectedcurvededge(U,V){}
\drawundirectedcurvededge(U,O){}\drawundirectedcurvededge(O,U){}
\drawundirectedcurvededge(S,Z){}\drawundirectedcurvededge(Z,S){}
\drawundirectedcurvededge(Z,J){}\drawundirectedcurvededge(J,Z){}
\drawundirectedcurvededge(a,b){}\drawundirectedcurvededge(b,a){}
\drawundirectedcurvededge(a,W){}\drawundirectedcurvededge(W,a){}

\drawundirectedcurvededge(q,r){}\drawundirectedcurvededge(r,q){}
\drawundirectedcurvededge(r,s){}\drawundirectedcurvededge(s,r){}
\drawundirectedcurvededge(AA,BB){}\drawundirectedcurvededge(BB,AA){}
\drawundirectedcurvededge(AA,u){}\drawundirectedcurvededge(u,AA){}
\drawundirectedcurvededge(k,j){}\drawundirectedcurvededge(j,k){}
\drawundirectedcurvededge(j,w){}\drawundirectedcurvededge(w,j){}
\drawundirectedcurvededge(CC,DD){}\drawundirectedcurvededge[r](DD,CC){}
\drawundirectedcurvededge(EE,FF){}\drawundirectedcurvededge[r](FF,EE){}

\drawundirectedcurvededge(LL,MM){}\drawundirectedcurvededge(MM,LL){}
\drawundirectedcurvededge(MM,NN){}\drawundirectedcurvededge(NN,MM){}
\drawundirectedcurvededge(X,K){}\drawundirectedcurvededge(K,X){}
\drawundirectedcurvededge(e,f){}\drawundirectedcurvededge(f,e){}
\drawundirectedcurvededge(h,i){}\drawundirectedcurvededge(i,h){}
\drawundirectedcurvededge(m,n){}\drawundirectedcurvededge(n,m){}

\drawundirectededge(C,D){} \drawundirectededge(D,F){}
\drawundirectededge(F,E){} \drawundirectededge(E,C){}

\drawundirectededge(F,G){} \drawundirectededge(G,M){}
\drawundirectededge(M,I){} \drawundirectededge(I,F){}

\drawundirectededge(M,N){} \drawundirectededge(N,O){}
\drawundirectededge(O,P){} \drawundirectededge(Q,P){}
\drawundirectededge(R,Q){} \drawundirectededge(R,S){}
\drawundirectededge(S,T){} \drawundirectededge(T,M){}

\drawundirectededge(K,Q){} \drawundirectededge(K,d){}
\drawundirectededge(d,f){} \drawundirectededge(g,f){}
\drawundirectededge(h,g){} \drawundirectededge(h,l){}
\drawundirectededge(l,m){} \drawundirectededge(Q,m){}

\drawundirectededge(d,Y){} \drawundirectededge(Y,W){}
\drawundirectededge(W,c){} \drawundirectededge(c,d){}

\drawundirectededge(l,p){} \drawundirectededge(p,q){}
\drawundirectededge(q,o){} \drawundirectededge(o,l){}

\drawundirectededge(t,g){} \drawundirectededge(t,u){}
\drawundirectededge(u,v){} \drawundirectededge(v,z){}
\drawundirectededge(z,x){} \drawundirectededge(x,k){}
\drawundirectededge(k,y){} \drawundirectededge(g,y){}

\drawundirectededge(z,DD){} \drawundirectededge(DD,GG){}
\drawundirectededge(GG,EE){} \drawundirectededge(EE,z){}

\drawundirectededge(GG,HH){} \drawundirectededge(HH,LL){}
\drawundirectededge(LL,II){} \drawundirectededge(II,GG){}

\drawundirectedloop[b](A){}\drawundirectedloop[l](b){}\drawundirectedloop[l](BB){}
\drawundirectedloop[l](D){}\drawundirectedloop[r](s){}\drawundirectedloop[r](w){}
\drawundirectedloop[r](E){}\drawundirectedloop(c){}\drawundirectedloop[r](x){}
\drawundirectedloop[l](H){}\drawundirectedloop[b](Y){}\drawundirectedloop[l](v){}
\drawundirectedloop[r](L){}\drawundirectedloop(p){}\drawundirectedloop[l](CC){}
\drawundirectedloop[l](V){}\drawundirectedloop[b](o){}\drawundirectedloop[r](FF){}
\drawundirectedloop[r](J){}\drawundirectedloop[l](X){}\drawundirectedloop[l](HH){}
\drawundirectedloop[l](N){}\drawundirectedloop[l](e){}\drawundirectedloop[r](II){}
\drawundirectedloop[l](P){}\drawundirectedloop[r](i){}\drawundirectedloop(NN){}
\drawundirectedloop[r](R){}\drawundirectedloop[l](t){}\drawundirectedloop[r](n){}
\drawundirectedloop[r](T){}\drawundirectedloop[r](y){}
\end{picture}
\end{turn}
\end{center}
In general, it follows from the recursive definition of the
generators, that each $\B_n$ is a cactus, i.e., a union of cycles
(all of them are of length power of 2) arranged in a tree-like
way. The maximal length of a cycle in $\B_n$ is
$2^{\frac{n+1}{2}}$ if $n$ is odd and $2^{\frac{n}{2}}$ if $n$ is
even. Moreover, for each $n\geq 2$, the graph $\B_n$ contains
exactly $2^{n-1}$ loops rooted at the vertices corresponding to
words in the alphabet $\{0,1\}$ starting by $1$, since the action
of the generator $a$ on these words is trivial.

\begin{prop}\label{numbercycles}
For any $n\geq 4$, the number $c_{n,i}$ of cycles of length $2^i$
in $\B_n$ is:
$$
c_{n,i}=
\begin{cases}
3\cdot 2^{n-2i-1} & \text{for }1\leq i \leq \frac{n}{2}-1\\
3 & \text{for } i = \frac{n}{2}
\end{cases}, \qquad \mbox{for } n \mbox{ even},
$$
and
$$
c_{n,i} = \begin{cases}
3\cdot 2^{n-2i-1} & \text{for }1\leq i \leq \frac{n-1}{2}-1\\
4 & \text{for } i = \frac{n-1}{2}\\
1 & \text{for } i= \frac{n+1}{2}
\end{cases}, \qquad \mbox{for }  n \mbox{ odd}.
$$
\end{prop}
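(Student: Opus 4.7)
The plan is to translate the counting of cycles in $\B_n$ into counting orbits of the two generators on $\{0,1\}^n$, and then to exploit the self-similar recursion to solve that counting problem in closed form.

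First I would verify, by inspection of the substitutional rules, that every cycle of $\B_n$ is monochromatic: all of its edges carry the label $a$ or all carry the label $b$. Equivalently, each cycle of length $k$ corresponds to a single orbit of size $k$ of one of the two generators acting on $\{0,1\}^n$. Writing $\alpha_k(n)$ and $\beta_k(n)$ for the number of $a$- and $b$-orbits of size $k$, this gives the clean identity
\[
c_{n,i} \;=\; \alpha_{2^i}(n) + \beta_{2^i}(n).
\]

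Next I would extract the recursions from the self-similar form of the generators. Since $a=\epsilon\!\!\!\!\epsilon(b,\operatorname{id})$ (trivial permutation on top), the generator $a$ fixes every vertex $1w$ and acts on $0w$ via $b$ on the suffix, so the non-trivial $a$-orbits on $\B_n$ are in bijection with the $b$-orbits on $\B_{n-1}$, yielding $\alpha_k(n) = \beta_k(n-1)$ for $k \geq 2$. Since $b=\epsilon(a,\operatorname{id})$, a direct computation gives $b^2(0w) = 0\,a(w)$, which shows that every $b$-orbit has even size and that $\beta_{2k}(n) = \alpha_k(n-1)$ for every $k \geq 1$. Iterating collapses the two relations into
\[
\alpha_{2^i}(n) \;=\; \alpha_1(n-2i), \qquad \beta_{2^i}(n) \;=\; \alpha_1(n-2i+1).
\]

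The remaining task is to substitute the values of $\alpha_1(m)$, read off at the appropriate base level. For $m \geq 2$ only the $2^{m-1}$ vertices of the form $1w$ are $a$-fixed; on $\B_1$ the generator $a$ acts as the identity, so both vertices are fixed; on the empty tree there is a single vertex. Hence $\alpha_1(m)$ takes the three values $2^{m-1}$, $2$, and $1$ respectively. Plugging these in reproduces the generic term $2^{n-2i-1}+2^{n-2i}=3\cdot 2^{n-2i-1}$ throughout the range $1 \leq i \leq \lfloor n/2\rfloor - 1$, while the three boundary counts $3$, $4$, and $1$ in the statement emerge precisely when one of the two iterations terminates on the exceptional value $\alpha_1(1)=2$ or $\alpha_1(0)=1$ (specifically, when $n$ is even and $i=n/2$, or $n$ is odd and $i=(n-1)/2$ or $i=(n+1)/2$).

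The main obstacle will be the careful bookkeeping at the top of the range: identifying, for each parity of $n$ and each extremal choice of $i$, which of the two iterations lands on the exceptional base value of $\alpha_1$. Everything else is a mechanical unwinding of the recursion, anchored by a small base case such as $n=4$ checked directly against the picture.
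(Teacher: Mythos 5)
Your strategy is correct and, as far as one can tell, is essentially the analysis underlying the result --- but note that the paper does not prove this proposition at all: its proof is the single line ``It follows from \cite[Proposition 2.2]{noiising}'', so you are supplying an argument where the authors only cite. Your reduction of cycles to monochromatic generator orbits, the two recursions $\alpha_k(n)=\beta_k(n-1)$ for $k\ge 2$ and $\beta_{2k}(n)=\alpha_k(n-1)$ read off from $a=e(b,\operatorname{id})$, $b=\epsilon(a,\operatorname{id})$, and the collapse to $c_{n,i}=\alpha_1(n-2i)+\alpha_1(n-2i+1)$ all check out (equivalently, one can extract the same two recursions from the substitutional rules: Rule II turns $b$-cycles into $a$-cycles of the same length, Rule III doubles an $a$-cycle into a $b$-cycle, and Rule I converts each $a$-loop into a new $b$-labelled $2$-cycle). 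Two points deserve explicit care in a write-up. First, the monochromaticity of cycles --- equivalently, that the generator orbits are exactly the blocks of the cactus --- is the one genuinely structural input; it is the same unproved assertion the paper makes when it declares $\B_n$ a cactus, and it does follow by induction on the substitution rules. Second, your closed form $\alpha_{2^i}(n)=\alpha_1(n-2i)$ is only valid for $n\ge 2i$: in the extremal odd case $i=\tfrac{n+1}{2}$ it would literally read $\alpha_1(-1)$, which is not one of the exceptional base values you list. There the $a$-iteration bottoms out at $\alpha_2(1)=0$ (since $a$ acts trivially on the first level it has no orbit of size $\ge 2$), and the count $c_{n,(n+1)/2}=1$ comes entirely from $\beta_{2^{(n+1)/2}}(n)=\alpha_1(0)=1$. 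With that case treated separately, the values $3\cdot 2^{n-2i-1}$, $4$, $3$, $1$ all drop out exactly as you describe and agree with the pictures of $\B_4$ and $\B_5$.
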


\begin{proof}
It follows from \cite[Proposition 2.2]{noiising}.
\end{proof}

To sum up, we have
$$
|V(\B_n)| = 2^n \qquad \qquad |E(\B_n)|= 2^{n+1}.
$$
As regards edges, note that there are $2^{n-1}$ loops, for $n\geq
2$, and 2 loops in $\B_1$.

Since many computation are trivial for graphs with loops, it is
convenient to define $\B_n^\ast$ as the graph $\B_n$ considered
without loops. Thus, in this case, we have
$$
|V(\B_n^\ast)| = 2^n \qquad \qquad |E(\B_n^ \ast)|=3\cdot2^{n-1}.
$$
For every $n \geq 1 $, denote by $T_n(x,y)$ the Tutte polynomial
$T(\B_n;x,y)$ of $\B_n$ and  by $T_n^\ast(x,y)$ the Tutte
polynomial $T(\B_n^\ast;x,y)$ of $\B_n^\ast$.

\begin{teo}\label{thmtuttebasilica}
For $n\geq 4$, the Tutte polynomial of the Schreier graph $\B_n$
of the Basilica group is
\begin{eqnarray}\label{basilicaodd}
T_n(x,y) &=& y^{2^{n-1}}(y+x+\cdots
+x^{2^{\frac{n-1}{2}}-1})^4(y+x+\cdots
+x^{2^{\frac{n+1}{2}}-1})\\ &\cdot&
\prod_{i=1}^{\frac{n-1}{2}-1}(y+x+\cdots +
x^{2^{i}-1})^{3\cdot 2^{n-2i-1}} \nonumber
\end{eqnarray}
for $n$ odd and
\begin{eqnarray}\label{basilicaeven}
T_n(x,y) =y^{2^{n-1}}(y+x+\cdots
+x^{2^{\frac{n}{2}}-1})^3\prod_{i=1}^{\frac{n}{2}-1}(y+x+\cdots +
x^{2^{i}-1})^{3\cdot 2^{n-2i-1}}
\end{eqnarray}
for $n$ even. Moreover, one has
$$
T_1(x,y)=y^2(x+y) \qquad T_2(x,y)=y^2(x+y)^3 \qquad
T_3(x,y)=y^4(x+y)^4(y+x+x^2+x^3).
$$
\end{teo}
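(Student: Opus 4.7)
The plan is to exploit the cactus structure of $\B_n$ (a block decomposition into cycles, loops, and bridges meeting only at cut vertices) together with the multiplicative property \eqref{prodotto} of the Tutte polynomial. Because in a cactus every block is either a loop, a bridge, or a simple cycle, and successive blocks are joined at a single vertex, an iterated application of \eqref{prodotto} expresses $T(\B_n;x,y)$ as the product of the Tutte polynomials of the individual blocks. Scanning the pictures of $\B_1,\dots,\B_6$ and using the substitutional rules (Rules I--III) inductively, one checks that $\B_n$ contains no bridges for $n\geq 2$; the non-loop blocks are exactly the cycles whose multiplicities are recorded in Proposition \ref{numbercycles}.

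Once this block decomposition is in hand, the evaluation is routine. By Definition \ref{contracting} each loop contributes the factor $y$, and there are $2^{n-1}$ loops in $\B_n$ for $n\geq 2$, producing the prefactor $y^{2^{n-1}}$. By Lemma \ref{singlecycle}, a cycle of length $2^i$ contributes the factor $y+x+x^2+\cdots+x^{2^i-1}$. Therefore
\[
T_n(x,y)= y^{2^{n-1}}\prod_{i\geq 1}\bigl(y+x+\cdots+x^{2^i-1}\bigr)^{c_{n,i}},
\]
where $c_{n,i}$ is the number of cycles of length $2^i$ in $\B_n$. Plugging in the values of $c_{n,i}$ from Proposition \ref{numbercycles} and separating the exceptional top-length cycles from the generic range $1\leq i\leq \lfloor (n-1)/2\rfloor-1$ (for $n$ odd) or $1\leq i\leq n/2-1$ (for $n$ even) gives the displayed formulas \eqref{basilicaodd} and \eqref{basilicaeven}.

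For the small cases $n=1,2,3$, Proposition \ref{numbercycles} does not apply, but the pictures of $\B_1$, $\B_2$, $\B_3$ displayed in the text show directly the block decomposition: $\B_1$ consists of one $2$-cycle and $2$ loops, giving $T_1=y^2(x+y)$; $\B_2$ consists of three $2$-cycles (joined in a path at cut vertices) and $2$ loops, giving $T_2=y^2(x+y)^3$; and $\B_3$ consists of four $2$-cycles, one $4$-cycle, and $4$ loops, giving $T_3=y^4(x+y)^4(y+x+x^2+x^3)$. In each case the multiplicative property together with Lemma \ref{singlecycle} delivers the claimed expression.

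The only genuine obstacle is justifying the cactus structure (and hence the legitimacy of iterating \eqref{prodotto}); everything else is bookkeeping. This can be handled by induction on $n$: Rules I--III replace a loop at a vertex $w$ by either a $2$-cycle attached at the single vertex $w$ (Rule I), or replace an edge of an existing cycle by a new $4$-cycle whose intersection with the old block is the single cut vertex (Rules II--III). Thus the cactus property propagates, no two distinct cycles share an edge, and each pair of blocks intersects in at most one (cut) vertex, which is exactly what is needed for the iterated one-point-join decomposition to apply.
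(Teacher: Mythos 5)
Your proposal is correct and follows essentially the same route as the paper: the paper's proof likewise invokes the multiplicative property \eqref{prodotto} to factor $T(\B_n;x,y)$ over the blocks of the cactus, uses Lemma \ref{singlecycle} for each cycle and the cycle counts of Proposition \ref{numbercycles}, with the loops contributing $y^{2^{n-1}}$. Your additional inductive justification of the cactus structure via the substitutional rules is a welcome supplement (the paper merely asserts it), though your paraphrase of Rules II--III is slightly imprecise about what they actually produce.
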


\begin{proof}
The proof follows from Proposition \ref{numbercycles} and Lemma
\ref{singlecycle}. More precisely, by multiplicative property \eqref{prodotto},
$T_n(x,y)$ is obtained as the product of the Tutte polynomials of
its cycles.
\end{proof}

\begin{cor}\label{cor.tutte BAS star}
For each $n \geq 4$, the Tutte polynomial of the graph $\B_n^\ast$  is
$$
T_n^\ast(x,y) = (y+x+\cdots
+x^{2^{\frac{n-1}{2}}-1})^4(y+x+\cdots +x^{2^{\frac{n+1}{2}}-1})\prod_{i=1}^{\frac{n-1}{2}-1}(y+x+\cdots +
x^{2^{i}-1})^{3\cdot 2^{n-2i-1}}
$$
for $n$ odd and
$$
T_n^\ast(x,y) =(y+x+\cdots
+x^{2^{\frac{n}{2}}-1})^3\prod_{i=1}^{\frac{n}{2}-1}(y+x+\cdots +
x^{2^{i}-1})^{3\cdot 2^{n-2i-1}}
$$
for $n$ even. Moreover, one has
$$
T_1^\ast(x,y)= (x+y) \qquad T_2^\ast(x,y)= (x+y)^3 \qquad
T_3^\ast(x,y)= (x+y)^4(y+x+x^2+x^3).
$$
\end{cor}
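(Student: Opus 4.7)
My plan is to reduce the corollary directly to Theorem~\ref{thmtuttebasilica} by factoring out the loop contribution. The key observation is that, for every $n$, the graph $\B_n^\ast$ is obtained from $\B_n$ by deleting exactly the loops, and the location of each loop (always at a single vertex, \emph{attached} to the rest of the graph at that one vertex) means that $\B_n$ is a one-point join of $\B_n^\ast$ with a collection of one-vertex one-loop graphs, one for each loop of $\B_n$.

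Concretely, I would first note that a one-vertex graph carrying a single loop has Tutte polynomial $y$: by Definition~\ref{contracting}, the loop rule gives $y\cdot T(E_1;x,y) = y\cdot 1 = y$. Then, iterating the multiplicative property \eqref{prodotto} across the vertices at which loops are rooted, I obtain
\[
T(\B_n;x,y) \;=\; y^{L_n}\cdot T(\B_n^\ast;x,y),
\]
where $L_n$ is the total number of loops of $\B_n$. Recalling from the discussion preceding the corollary that $L_n = 2^{n-1}$ for $n\geq 2$, one may read off $T_n^\ast(x,y)$ by simply removing the factor $y^{2^{n-1}}$ from the formulas \eqref{basilicaodd} and \eqref{basilicaeven} of Theorem~\ref{thmtuttebasilica}, which yields exactly the announced expressions for $n\geq 4$, separated by parity.

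For the base cases $n=1,2,3$, I would check them individually against the explicit formulas for $T_1$, $T_2$, $T_3$ listed in Theorem~\ref{thmtuttebasilica}: dividing these by $y^2$, $y^2$, and $y^4$ respectively (the number of loops is $2,2,4$ for these three levels, as one sees from the pictures of $\B_1,\B_2,\B_3$) produces precisely $x+y$, $(x+y)^3$, and $(x+y)^4(y+x+x^2+x^3)$. There is no substantive obstacle here — the argument is a direct bookkeeping consequence of \eqref{prodotto} and the explicit loop count — the only minor point to be careful about is that the loop count at level $1$ does not fit the pattern $2^{n-1}$, which is why one verifies the three small cases separately.
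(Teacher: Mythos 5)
Your argument is correct and is exactly the one the paper intends (the corollary is stated without proof precisely because it amounts to stripping the loop factor $y^{L_n}$ from Theorem~\ref{thmtuttebasilica} via the multiplicative property~\eqref{prodotto}, with $L_n=2^{n-1}$ for $n\geq 2$ and the three small levels checked by hand). Your bookkeeping of the loop counts $2,2,4$ at levels $1,2,3$ and the caveat about $n=1$ not fitting the pattern are both accurate.
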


Let us start by writing the reliability polynomial $R(\B_n,p)$.
\begin{prop}\label{prop.reliability BAS}
For each $n\geq 4$, the reliability polynomial $R(\B_n,p)$ is
given by
$$
R(\B_n,p) = p^{2^n-1}  (1-p)^{2^{n-1}+1}
 \left(
2^{\frac{n-1}{2}}+ \frac{p}{1-p} \right)^4  \left(
2^{\frac{n+1}{2}}+ \frac{p}{1-p} \right)\prod_{i=1}^{\frac{n-1}{2}-1}\left(
\frac{p}{1-p}+2^i\right)^{3\cdot 2^{n-2i-1}}
$$
for $n$ odd and
$$
R(\B_n,p)= p^{2^n-1} (1-p)^{2^{n-1}+1}
 \left( 2^{\frac{n}{2}}+ \frac{p}{1-p} \right)^3\prod_{i=1}^{\frac{n}{2}-1}\left( \frac{p}{1-p}+2^i\right)^{3\cdot
2^{n-2i-1}}
$$
for $n$ even. Moreover, one has
$$
R(\B_1,p)= p(2-p) \qquad R(\B_2,p)=  p^3(2-p)^3 \qquad R(\B_3,p)=
p^7(2-p)^4(4-3p).
$$
\end{prop}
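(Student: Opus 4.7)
The plan is to apply Theorem~\ref{twopolynomials}(1) directly, substituting the values $|V(\B_n)|=2^n$ and $|E(\B_n)|=2^{n+1}$ and the explicit Tutte polynomial from Theorem~\ref{thmtuttebasilica} evaluated at $(x,y)=(1,\frac{1}{1-p})$. The identity
$$
R(\B_n,p)=p^{2^n-1}(1-p)^{2^{n+1}-2^n+1}\, T_n\!\left(1,\tfrac{1}{1-p}\right)=p^{2^n-1}(1-p)^{2^n+1}\, T_n\!\left(1,\tfrac{1}{1-p}\right)
$$
reduces the problem to an evaluation.

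First, I would handle a single cycle factor. Setting $x=1$ and $y=\frac{1}{1-p}$ in $y+x+\cdots+x^{2^i-1}$ gives
$$
\frac{1}{1-p}+(2^i-1)=\frac{1-(1-p)}{1-p}+2^i=\frac{p}{1-p}+2^i,
$$
so each cycle of length $2^i$ contributes exactly the factor $\bigl(\tfrac{p}{1-p}+2^i\bigr)$ (raised to its multiplicity from Proposition~\ref{numbercycles}), which matches the product that appears in the statement.

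Next, I would handle the loop prefactor $y^{2^{n-1}}$ in $T_n(x,y)$: it evaluates to $(1-p)^{-2^{n-1}}$. Multiplying this into the prefactor $(1-p)^{2^n+1}$ coming from $|E(\B_n)|-|V(\B_n)|+1=2^n+1$, the exponent of $(1-p)$ collapses as
$$
(2^n+1)-2^{n-1}=2^{n-1}+1,
$$
which is exactly the power of $(1-p)$ in the claimed formula. Combined with $p^{2^n-1}$ and the evaluated cycle factors, this yields the stated expression for both parities $n\geq 4$.

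Finally, for the base cases $n=1,2,3$ I would substitute the explicit polynomials $T_1$, $T_2$, $T_3$ listed at the end of Theorem~\ref{thmtuttebasilica} and perform the same evaluation by hand, using $|E(\B_n)|-|V(\B_n)|+1$ equal to $1,3,5$ for $n=1,2,3$ respectively. No step here is really an obstacle — the computation is fully determined by the algebraic identity $\tfrac{1}{1-p}+(k-1)=\tfrac{p}{1-p}+k$ — but care is needed in combining the loop exponent $2^{n-1}$ (which differs from the power of $(1-p)$ that would appear if $\B_n$ were loopless) with the exponent of $(1-p)$ in the reliability prefactor, so that the final power of $(1-p)$ is $2^{n-1}+1$ as claimed.
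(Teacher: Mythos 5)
Your approach is exactly the paper's: the proof there consists of the single line ``apply Equation (1) of Theorem \ref{twopolynomials}'', and your write-up just carries out that evaluation explicitly. The main computation for $n\geq 4$ is correct: each cycle factor $y+x+\cdots+x^{2^i-1}$ does evaluate at $(1,\tfrac{1}{1-p})$ to $\tfrac{p}{1-p}+2^i$, and the bookkeeping $(1-p)^{2^{n}+1}\cdot(1-p)^{-2^{n-1}}=(1-p)^{2^{n-1}+1}$ for the loop prefactor is right.

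There is, however, a concrete slip in your treatment of the base cases. You propose to use the exponents $|E(\B_n)|-|V(\B_n)|+1=1,3,5$ for $n=1,2,3$, but with $|E(\B_n)|=2^{n+1}$ (i.e.\ $4,8,16$ edges, loops included) these exponents are $3,5,9$. The values $1,3,5$ are what you get from the \emph{loopless} edge counts $|E(\B_n^\ast)|-|V(\B_n)|+1$ (namely $2,6,12$ edges), and they are only consistent if you pair them with $T_n^\ast$ rather than with the polynomials $T_1=y^2(x+y)$, $T_2=y^2(x+y)^3$, $T_3=y^4(x+y)^4(y+x+x^2+x^3)$ from Theorem \ref{thmtuttebasilica}, which carry the loop factors $y^2,y^2,y^4$. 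As written, your recipe for $n=1$ would give $p(1-p)^{1}\cdot(1-p)^{-2}\cdot\tfrac{2-p}{1-p}=p(2-p)/(1-p)^2$ instead of $p(2-p)$. The fix is immediate (use $3,5,9$ with $T_n$, or $1,3,5$ with $T_n^\ast$ -- hmm, note $|E(\B_1^\ast)|=2$, not $3\cdot2^{0}=3$, since $\B_1$ has two loops), but as stated that step would fail.
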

\begin{proof}
It suffices to apply Equation (1) of Theorem \ref{twopolynomials}.
\end{proof}

\begin{os}\rm
Note that the existence of loops does not change the reliability polynomial; therefore  $R(\B_n,p)=R(\B_n^\ast,p)$, as one can directly check.
\end{os}

Evaluating $T_n(x,y)$ in $(1,1)$, we get the
complexity $\tau(\B_n)$, i.e., the number of spanning trees of
$\B_n$.
\begin{prop}\label{prop.complexity BAS}
The complexity of $\B_n$ is
$$
\tau(\B_n) =
\begin{cases}
 2^{\frac{2^{n+2}+3n-5}{6}}& \text{for $n$ odd}\\
2^{\frac{2^{n+2}+3n-4}{6}} & \text{for $n$ even }
\end{cases}.
$$
\end{prop}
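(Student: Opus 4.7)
The plan is to apply part (1) of Theorem~\ref{evaluations}, namely $\tau(\B_n)=T_n(1,1)$, to the explicit product formulas for $T_n(x,y)$ provided by Theorem~\ref{thmtuttebasilica}. The prefactor $y^{2^{n-1}}$ contributes $1$ at $y=1$, while each cycle factor $y+x+\cdots+x^{2^i-1}$ evaluates to $2^i$ (this is just $\tau(C_{2^i})$, the number of spanning trees of a single cycle of length $2^i$, consistent with Lemma~\ref{singlecycle}). Thus $\tau(\B_n)$ is a product of powers of $2$, and the task reduces to identifying the total exponent.

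Concretely, in the odd case the exponent of $2$ in $T_n(1,1)$ is
\[
2(n-1) + \tfrac{n+1}{2} + \sum_{i=1}^{(n-3)/2} 3\,i\cdot 2^{n-2i-1},
\]
arising respectively from the four cycles of length $2^{(n-1)/2}$, the unique cycle of length $2^{(n+1)/2}$, and the generic cycles counted by Proposition~\ref{numbercycles}. The even case is similar, with $3n/2 + \sum_{i=1}^{n/2-1} 3i\cdot 2^{n-2i-1}$. The substantive step is evaluating the arithmetic--geometric sum $\sum_{i=1}^{m} i\cdot 2^{n-2i-1}$, which I would handle by factoring out $2^{n-1}$ and using the standard identity $\sum_{i\ge 1} i r^i = r/(1-r)^2$ with $r=1/4$, or equivalently by the telescoping trick $S-\tfrac14 S$. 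Using $4^{(n-3)/2}=2^{n-3}$ in the odd case and $4^{n/2-1}=2^{n-2}$ in the even case collapses the boundary term of the finite sum, giving closed forms that are simple linear combinations of $2^{n+1}$ and $n$.

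Bringing everything over the common denominator $6$ should then produce the two claimed exponents $(2^{n+2}+3n-5)/6$ and $(2^{n+2}+3n-4)/6$. The main obstacle is purely bookkeeping: one must keep the two exceptional boundary cycles in the odd case separate from the generic product, track carefully the upper limit of the finite sum, and combine the fractions without losing the low-order linear terms in $n$ that distinguish the odd and even closed forms. Sanity-checking on $n=3$ (where the formula predicts $2^6=64$, matching $T_3(1,1)=1\cdot 2^4\cdot 4$) and $n=4$ (where it predicts $2^{12}$, matching $T_4(1,1)=1\cdot 4^3\cdot 2^6$) confirms that the bookkeeping is set up correctly before running the general computation.
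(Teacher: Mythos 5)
Your proposal is correct and follows essentially the same route as the paper: evaluate the explicit product formula of Theorem \ref{thmtuttebasilica} at $(1,1)$, note that each cycle of length $2^i$ contributes a factor $2^i$, and sum the exponents via the arithmetic--geometric series $\sum_i i\cdot 2^{n-2i-1}$ (the paper simply asserts the resulting closed form, while you spell out the summation; you also correctly note that $n=1,2,3$ must be checked against the separately listed small-case polynomials).
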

\begin{proof}
It suffices to apply Formula (1) of Theorem \ref{evaluations}.
Indeed, one gets
$$
T_n(1,1)=
  2^{\frac{4(n-1)}{2}} \cdot  2^{\frac{n+1}{2}} \prod_{i=1}^{\frac{n-1}{2}-1}2^{i\cdot 3\cdot 2^{n-2i-1}}=
2^{\frac{2^{n+2}+3n-5}{6}}
$$
for $n$ odd and
$$
T_n(1,1)= 2^{\frac{3n}{2}}\prod_{i=1}^{\frac{n}{2}-1}2^{i\cdot 3\cdot 2^{n-2i-1}}= 2^{\frac{2^{n+2}+3n-4}{6}}
$$
for $n$ even. For $n=1,2,3$, one can directly find $T_1(1,1)=2$,
$T_2(1,1)=2^3$, $T_3(1,1)=2^6$.
\end{proof}
\begin{os}\rm
 The previous equations can be
motivated in the following way. In order to have a spanning tree
of $\B_n$, we do not have to consider loops; moreover, we have to
delete exactly one edge in every cycle, so that every cycle of
length $2^i$ contributes by a factor $2^i$.
Since   the loops do not contribute
to the number of spanning trees of $\B_n$, note that   $\tau(\B_n) = \tau(\B_n^\ast)$, for each
$n\geq 1$.
\end{os}

\begin{cor}
The asymptotic growth constant of the spanning trees of $\B_n$ is
$\frac{2}{3}\log 2$.
\end{cor}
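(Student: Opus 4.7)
The plan is to substitute the explicit formulas from Proposition \ref{prop.complexity BAS} into the definition of the asymptotic growth constant and take the limit. First I would note that $|V(\B_n)| = 2^n \to \infty$, so the limit in the definition makes sense.

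Next, using $\tau(\B_n) = 2^{(2^{n+2}+3n-5)/6}$ for $n$ odd and $\tau(\B_n) = 2^{(2^{n+2}+3n-4)/6}$ for $n$ even, I would write
\[
\frac{\log \tau(\B_n)}{|V(\B_n)|} \;=\; \frac{2^{n+2} + 3n + O(1)}{6 \cdot 2^n}\,\log 2,
\]
where the $O(1)$ absorbs the constant $-5$ or $-4$ depending on parity. Splitting this fraction yields
\[
\frac{\log \tau(\B_n)}{|V(\B_n)|} \;=\; \frac{4}{6}\log 2 \;+\; \frac{3n + O(1)}{6 \cdot 2^n}\,\log 2.
\]

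The second summand tends to $0$ as $n\to\infty$ since $n/2^n \to 0$, so the limit equals $\frac{4}{6}\log 2 = \frac{2}{3}\log 2$, as desired. There is no real obstacle here: the computation is essentially one arithmetic manipulation once the formula for $\tau(\B_n)$ is in hand. The only mild care needed is to treat the two parities uniformly, which is handled by lumping the parity-dependent constant into $O(1)$, since it contributes a vanishing term once divided by $2^n$.
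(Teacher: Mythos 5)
Your proposal is correct and follows exactly the paper's approach: substitute the explicit formula for $\tau(\B_n)$ from Proposition \ref{prop.complexity BAS} into the defining limit $\lim_{n\to\infty}\log\tau(\B_n)/|V(\B_n)|$ with $|V(\B_n)|=2^n$. The paper leaves the arithmetic implicit, while you carry it out; the handling of the two parities via an $O(1)$ term is a clean way to do it.
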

\begin{proof}
It suffices to compute
$$
\lim_{n\to \infty}\frac{\log(\tau(\B_n))}{|V(\B_n)|},
$$
with $|V(\B_n)|=2^n$.
\end{proof}

The evaluation of $T_n(x,y)$ in $(1,2)$ provides the number of connected
spanning subgraphs of $\B_n$. More precisely, the following
proposition holds.
 \begin{prop}\label{prop.conn.spanning.subgr BAS}
The number of connected spanning subgraphs of $\B_n$ is
$$
T_n(1,2)= 2^{2^{n-1}}
  \left( 1 + 2^{\frac{n-1}{2}} \right)^4  \left( 1+ 2^{\frac{n+1}{2}}\right)\prod_{i=1}^{\frac{n-1}{2}-1}\left( 1+2^i \right)^{3\cdot 2^{n-2i-1}}
$$
for $n$ odd and
$$
T_n(1,2)= 2^{2^{n-1}}
  \left( 1+ 2^{\frac{n}{2}} \right)^3 \prod_{i=1}^{\frac{n}{2}-1}\left( 1+2^i\right)^{3\cdot 2^{n-2i-1}}
$$
for $n$ even. Moreover, one has
$$
T_1(1,2)= 2^2 \cdot 3 \qquad T_2(1,2)= 2^2\cdot 3^3 \qquad
T_3(1,2)= 2^4\cdot 3^4\cdot 5.
$$
\end{prop}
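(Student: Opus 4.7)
The plan is to simply evaluate the closed-form expressions for $T_n(x,y)$ from Theorem \ref{thmtuttebasilica} at the point $(x,y) = (1,2)$, invoking part (2) of Theorem \ref{evaluations} which identifies $T(G;1,2)$ with the number of connected spanning subgraphs of $G$.

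The key observation is that each factor of the form $y + x + x^2 + \cdots + x^{2^i - 1}$ specializes very cleanly at $(1,2)$: the $2^i - 1$ powers of $x$ each contribute $1$, and $y$ contributes $2$, so the factor becomes $2 + (2^i - 1) = 1 + 2^i$. The remaining factor $y^{2^{n-1}}$ becomes $2^{2^{n-1}}$. So I would substitute this term-by-term into formulas \eqref{basilicaodd} and \eqref{basilicaeven}, treating the odd and even cases separately, to obtain the two product expressions stated in the proposition.

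For the base cases $n = 1, 2, 3$, I would not invoke the general formulas but instead evaluate directly the three explicit polynomials $T_1(x,y) = y^2(x+y)$, $T_2(x,y) = y^2(x+y)^3$, and $T_3(x,y) = y^4(x+y)^4(y + x + x^2 + x^3)$ listed at the end of Theorem \ref{thmtuttebasilica}, obtaining $T_1(1,2) = 4 \cdot 3$, $T_2(1,2) = 4 \cdot 27$, and $T_3(1,2) = 16 \cdot 81 \cdot 5$, which match the claimed values.

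There is essentially no obstacle in this argument: it is a purely mechanical specialization of a known polynomial identity. The only minor bookkeeping issue is making sure that, in the odd case, the factor $(y + x + \cdots + x^{2^{(n-1)/2}-1})^4$ corresponding to cycles of length $2^{(n-1)/2}$ and the factor $(y + x + \cdots + x^{2^{(n+1)/2}-1})$ coming from the unique longest cycle are each evaluated correctly as $(1 + 2^{(n-1)/2})^4$ and $(1 + 2^{(n+1)/2})$, so that the final expression faithfully mirrors the structure given in Proposition \ref{numbercycles}.
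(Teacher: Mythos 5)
Your proposal is correct and is exactly the paper's argument: the paper's proof is the one-line ``apply Formula (2) of Theorem \ref{evaluations}'' to the closed form of Theorem \ref{thmtuttebasilica}, and your term-by-term specialization (each cycle factor becoming $2+(2^i-1)=1+2^i$, the loop factor becoming $2^{2^{n-1}}$, with the base cases $n=1,2,3$ checked directly) just spells out that same computation.
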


\begin{proof}
It suffices to apply Formula (2) of Theorem \ref{evaluations}.
\end{proof}

\begin{os}\rm
The value that we have found for the number of connected spanning
subgraphs of $\B_n$ has the following interpretation: the factor
$2^{2^{n-1}}$ corresponds to the possibility of choosing loops in
the subgraph. On the other hand, in order to get a connected
spanning subgraph, each cycle of length $2^i$ contributes by a
factor $2^i+1$, since we can take the whole cycle or delete
exactly one edge from it.
\end{os}

Another interesting computation concerns the number of spanning
forests of the Schreier graph $\B_n$, which is given by
$T_n(2,1)$.
\begin{prop}\label{prop.spanning forest BAS}
The number of spanning forests of $\B_n$ is
$$
T_n(2,1)=   \left( 2^{2^{\frac{n-1}{2}}} -1 \right)^4   \left(  2^{2^{\frac{n+1}{2}}} -1 \right) \prod_{i=1}^{\frac{n-1}{2}-1}\left( 2^{2^i} -1 \right)^{3\cdot 2^{n-2i-1}}
$$
for $n$ odd and
$$
T_n(2,1)=  \left(  2^{2^{\frac{n}{2}}} -1 \right)^3 \prod_{i=1}^{\frac{n}{2}-1}
\left( 2^{2^i} -1 \right)^{3\cdot 2^{n-2i-1}}
$$
for $n$ even. Moreover, one has
$$
T_1(2,1)=  3 \qquad T_2(2,1)=   3^3 \qquad
T_3(2,1)=   3^5\cdot 5.
$$
\end{prop}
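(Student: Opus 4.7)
The plan is to obtain the formulas by direct substitution in the expressions of $T_n(x,y)$ already computed in Theorem \ref{thmtuttebasilica}. By Formula (3) of Theorem \ref{evaluations}, the number of spanning forests of $\B_n$ is exactly $T_n(2,1)$, so all that is required is to plug $(x,y)=(2,1)$ into the factored forms of $T_n(x,y)$.

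The first observation to make is that the prefactor $y^{2^{n-1}}$, coming from the loops, evaluates to $1$ when $y=1$; this is consistent with the remark that loops cannot appear in a spanning forest. The remaining cyclic factors are of the form $y+x+x^2+\cdots+x^{k-1}$ with $k=2^i$, and at $(x,y)=(2,1)$ each such factor becomes the geometric sum
\begin{equation*}
1+2+2^2+\cdots+2^{2^i-1}=2^{2^i}-1.
\end{equation*}
Substituting this into \eqref{basilicaodd} and \eqref{basilicaeven}, using the multiplicities $c_{n,i}$ recorded in Proposition \ref{numbercycles}, yields immediately the two product formulas in the statement (the exponents $3$, $4$ and $1$ on the largest cycles reflecting exactly the boundary values of $c_{n,i}$ for $i=n/2$ or $i=(n\pm 1)/2$).

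For the initial cases $n=1,2,3$, I would simply evaluate the explicit polynomials listed at the end of Theorem \ref{thmtuttebasilica}: $T_1(2,1)=1\cdot 3=3$, $T_2(2,1)=1\cdot 3^3$, and $T_3(2,1)=1\cdot 3^4\cdot(1+2+4+8)=3^5\cdot 5$, matching the stated values.

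There is essentially no technical obstacle here; the only point that requires a bit of care is keeping track of the cycle-length multiplicities from Proposition \ref{numbercycles}, in particular the distinction between the generic factor $3\cdot 2^{n-2i-1}$ valid for $1\le i\le \lfloor n/2\rfloor-1$ and the special values at the top index ($3$ in the even case, and $4$ and $1$ in the odd case). Once these are inserted correctly, the formulas follow mechanically from \eqref{basilicaodd}, \eqref{basilicaeven} and the geometric-series identity above. One may also remark, as a sanity check consistent with the earlier observations about loops and forests, that $T_n(2,1)=T_n^\ast(2,1)$, reflecting the fact that loops never appear in a spanning forest.
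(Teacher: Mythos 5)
Your proposal is correct and follows the same route as the paper: the paper's proof is exactly the application of Formula (3) of Theorem \ref{evaluations} together with substitution of $(x,y)=(2,1)$ into the factored Tutte polynomials of Theorem \ref{thmtuttebasilica}, with the geometric-sum evaluation $1+2+\cdots+2^{2^i-1}=2^{2^i}-1$ and the loop observation relegated to the subsequent remark. Your added details (the vanishing of the loop factor at $y=1$ and the check of the small cases) are consistent with what the paper states.
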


\begin{proof}
It suffices to apply Formula (3) of Theorem \ref{evaluations}.
\end{proof}

\begin{os}\rm
The value that we have found for the number of spanning forests of
$\B_n$ has the following interpretation: a spanning forest of
$\B_n$ cannot contain loops nor the whole cycles, since this would
produce a cycle. Therefore, we can choose or not each edge of any
$2^i$-cycle, but we cannot choose in a spanning forest all the
edges of the cycle. Hence, a $2^i$-cycle contributes to the number
of spanning forests by a factor $2^{2^i}-1$. Moreover, observe
that $T_n(2,1)= T_n^\ast(2,1)$, for each $n\geq 1$.
\end{os}

Next, we explicitly verify that by evaluating the Tutte polynomial
of $\B_n$ in $(2,2)$ one gets $2^{|E(\B_n)|}$ (see Formula (4) of
 Theorem \ref{evaluations}).

\begin{prop}
For each $n\geq 1$, one has $T_n(2,2)= 2^{|E(\B_{n})|}=2^{
2^{n+1}}$.
\end{prop}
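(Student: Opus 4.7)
The plan is to substitute $(x,y)=(2,2)$ directly into the explicit product formulas from Theorem \ref{thmtuttebasilica} and verify the arithmetic. The key observation, obtained from Lemma \ref{singlecycle}, is that for any cycle $C_k$ of length $k\geq 2$, the geometric sum telescopes:
\[
T(C_k;2,2) = 2 + 2 + 2^2 + \cdots + 2^{k-1} = 2^k.
\]
Hence each cyclic factor $(y+x+\cdots+x^{2^i-1})$ appearing in $T_n$ becomes $2^{2^i}$ under the specialization, while the loop factor $y^{2^{n-1}}$ becomes $2^{2^{n-1}}$.

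Next I would collect exponents. For $n$ odd, this gives
\[
T_n(2,2) = 2^{\,2^{n-1}\,+\,4\cdot 2^{(n-1)/2}\,+\,2^{(n+1)/2}\,+\,\sum_{i=1}^{(n-1)/2-1} 3\cdot 2^{n-2i-1}\cdot 2^{i}},
\]
and analogously for $n$ even. Here comes the only mild computation: the rightmost sum equals $\sum_{i=1}^{(n-1)/2-1} 3\cdot 2^{n-i-1}$, a geometric series which collapses cleanly so that the total exponent simplifies to $2^{n+1}$. The even case is handled identically.

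Rather than push the arithmetic, I prefer to use a structural shortcut. Since $\B_n$ is a cactus, its edge set is partitioned into the disjoint union of its loops (each contributing $1$ edge) and its cycles of length $2^i$ (each contributing $2^i$ edges). The contribution of every such component to the exponent of $2$ in $T_n(2,2)$ equals precisely its edge-count, by the multiplicative property \eqref{prodotto} together with the evaluations above. Summing over all components therefore yields $|E(\B_n)|$, and invoking the identity $|E(\B_n)|=2^{n+1}$ recorded just before the statement completes the argument.

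The main (minor) obstacle is merely the bookkeeping of the product over cycle lengths; the small initial cases $n=1,2,3$, covered by separate formulas in Theorem \ref{thmtuttebasilica}, are dispatched by direct numerical check (for instance, $T_3(2,2)=2^4\cdot 4^4\cdot 16 = 2^{16}$, matching $|E(\B_3)|=16$).
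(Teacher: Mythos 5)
Your proposal is correct and follows essentially the same route as the paper: substitute $x=y=2$ into the product formulas of Theorem \ref{thmtuttebasilica}, note that each cycle of length $2^i$ contributes a factor $2^{2^i}$ (and each loop a factor $2$), and sum the exponents over the cactus decomposition to recover $|E(\B_n)|=2^{n+1}$. The small cases $n=1,2,3$ check out as you indicate.
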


\begin{proof}
By replacing $x=y=2$ in Equations \eqref{basilicaodd} and
\eqref{basilicaeven}, it turns out that each cycle of length $2^i$
contributes by a factor $2^{2^i}$ to $T_n(2,2)$.
\end{proof}

Finally, by evaluating the Tutte polynomial of $\B_n$ in $(2,0)$,
we investigate the number of acyclic orientations of $\B_n$.
Observe that, whenever we have loops, the number of possible
acyclic orientations on the graphs is $0$. Therefore, we consider
the graph $\B_n^\ast$.

\begin{prop}\label{prop.acyclic orienta BAS}
The number of acyclic orientations on $\B_n^{\ast}$ is
$$
T_n^\ast(2,0)=   \left( 2^{2^{\frac{n-1}{2}}}
-2 \right)^4   \left(  2^{2^{\frac{n+1}{2}}} -2 \right) \prod_{i=1}^{\frac{n-1}{2}-1}\left( 2^{2^i}
-2\right)^{3\cdot 2^{n-2i-1}}
$$
for $n$ odd and
$$
T_n^\ast(2,0)=  \left(  2^{2^{\frac{n}{2}}} -2 \right)^3 \prod_{i=1}^{\frac{n}{2}-1}
\left( 2^{2^i} -2 \right)^{3\cdot 2^{n-2i-1}}
$$
for $n$ even. Moreover, one has
$$
T_1^\ast(2,0)=  2 \qquad T_2^\ast(2,0)=   2^3 \qquad
T_3^\ast(2,0)=   2^5\cdot 7.
$$
\end{prop}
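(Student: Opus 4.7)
The plan is to apply Formula (5) of Theorem \ref{evaluations}, which says that the number of acyclic orientations of a graph $G$ equals $T(G;2,0)$, to the explicit expression for the Tutte polynomial $T_n^\ast(x,y)$ given in Corollary \ref{cor.tutte BAS star}. Since $T_n^\ast$ is a product of factors of the form $(y+x+\cdots+x^{2^i-1})$, the evaluation reduces to a purely computational substitution.

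First, I would observe that a single cycle $C_k$ contributes $T(C_k;2,0) = 0 + 2 + 2^2 + \cdots + 2^{k-1} = 2^k - 2$ by Lemma \ref{singlecycle}. Consequently, every cycle of length $2^i$ appearing in $\B_n^\ast$ contributes a factor of $2^{2^i}-2$ to $T_n^\ast(2,0)$. Using the multiplicative property \eqref{prodotto} (which justified the product structure of the Tutte polynomial of $\B_n^\ast$ in the first place), together with the cycle counts from Proposition \ref{numbercycles}, one immediately gets
\[
T_n^\ast(2,0) = \bigl(2^{2^{(n-1)/2}}-2\bigr)^4\bigl(2^{2^{(n+1)/2}}-2\bigr)\prod_{i=1}^{(n-1)/2-1}\bigl(2^{2^i}-2\bigr)^{3\cdot 2^{n-2i-1}}
\]
when $n$ is odd, and the analogous even-$n$ formula, exactly matching the statement.

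For the small cases $n=1,2,3$, I would just evaluate the closed forms given at the end of Corollary \ref{cor.tutte BAS star}: $T_1^\ast(2,0) = 2+0 = 2$, $T_2^\ast(2,0) = (2+0)^3 = 2^3$, and $T_3^\ast(2,0) = (2+0)^4(0+2+4+8) = 2^4\cdot 14 = 2^5\cdot 7$.

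There is no real obstacle here; this is essentially a substitution exercise once Corollary \ref{cor.tutte BAS star} and Proposition \ref{numbercycles} are in hand. The only bookkeeping point to be careful about is matching exponents between the product and the boundary cycles (the ones of maximal length $2^{(n\pm1)/2}$ or $2^{n/2}$), which is where the special factors $(2^{2^{(n-1)/2}}-2)^4$, $(2^{2^{(n+1)/2}}-2)$, and $(2^{2^{n/2}}-2)^3$ come from. Once this indexing is aligned with the statement of Proposition \ref{numbercycles}, the verification is immediate, and the proof reduces to a one-line appeal to Theorem \ref{evaluations}(5).
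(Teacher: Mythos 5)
Your proposal is correct and follows essentially the same route as the paper: substitute $x=2$, $y=0$ into the product formula for $T_n^\ast$, noting that each cycle of length $2^i$ contributes the factor $0+2+2^2+\cdots+2^{2^i-1}=2^{2^i}-2$, and check the small cases $n=1,2,3$ directly. The arithmetic in your boundary cases matches the stated values, so nothing further is needed.
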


\begin{proof}
By replacing $x=2$ and $y=0$ in Equations \eqref{basilicaodd} and
\eqref{basilicaeven}, it turns out that each cycle of length $2^i$
contributes by a factor $2^{2^i}-2$ to $T_n^\ast(2,0)$.
\end{proof}

\begin{os}\rm
The fact that each cycle of length $2^i$ contributes by the factor
$2^{2^i}-2$ has the following interpretation: there are two
possible orientations for each edge, but we have to avoid the two
cases where we create an oriented cycle (represented in the
following picture in the case of a $2^3$-cycle).

\begin{center}
\begin{picture}(300,95)
\letvertex A=(80,85)\letvertex B=(52,73)\letvertex C=(40,45)\letvertex D=(52,17)
\letvertex E=(80,5)\letvertex F=(108,17)\letvertex G=(120,45)\letvertex H=(108,73)

\drawedge(A,B){} \drawedge(B,C){} \drawedge(C,D){}
\drawedge(D,E){}\drawedge(E,F){} \drawedge(F,G){} \drawedge(G,H){}
\drawedge(H,A){}

\drawvertex(A){$\bullet$}\drawvertex(B){$\bullet$}
\drawvertex(C){$\bullet$}\drawvertex(D){$\bullet$}
\drawvertex(E){$\bullet$}\drawvertex(F){$\bullet$}
\drawvertex(G){$\bullet$}\drawvertex(H){$\bullet$}

\letvertex a=(220,85)\letvertex b=(192,73)\letvertex c=(180,45)\letvertex d=(192,17)
\letvertex e=(220,5)\letvertex f=(248,17)\letvertex g=(260,45)\letvertex h=(248,73)

\drawedge(b,a){} \drawedge(c,b){} \drawedge(d,c){}
\drawedge(e,d){}\drawedge(f,e){} \drawedge(g,f){} \drawedge(h,g){}
\drawedge(a,h){}

\drawvertex(a){$\bullet$}\drawvertex(b){$\bullet$}
\drawvertex(c){$\bullet$}\drawvertex(d){$\bullet$}
\drawvertex(e){$\bullet$}\drawvertex(f){$\bullet$}
\drawvertex(g){$\bullet$}\drawvertex(h){$\bullet$}
\end{picture}
\end{center}
\end{os}

Since $\B_n$ has loops, it does not admit any proper coloring, so
that we investigate the chromatic polynomial of $\B_n^{\ast}$.

\begin{prop}\label{prop.chromatic polyno BAS}
For each $n\geq 4$, the chromatic polynomial $\chi_n(\lambda)$ of
$\B_n^{\ast}$ is
$$
\chi_n(\lambda)\! = -\lambda\!\!
\left( \frac{1-\lambda}{\lambda}  -
\frac{(1-\lambda)^{2^{\frac{n-1}{2}}}}{\lambda} \right)^4 \!\!
\left( \frac{1-\lambda}{\lambda} -
\frac{(1-\lambda)^{2^{\frac{n+1}{2}}}}{\lambda}  \right)\!\! \! \prod_{i=1}^{\frac{n-1}{2}-1}\!\!\!
\left(\frac{1-\lambda}{\lambda}   -
\frac{(1-\lambda)^{2i}}{\lambda} \right)^{3\cdot 2^{n-2i-1}}
$$
for $n$ odd and
$$
\chi_n(\lambda)= -\lambda
\left( \frac{1-\lambda}{\lambda}  -
\frac{(1-\lambda)^{2^{\frac{n}{2}}}}{\lambda} \right)^3 \ \prod_{i=1}^{\frac{n}{2}-1}
\left(\frac{1-\lambda}{\lambda}   -
\frac{(1-\lambda)^{2i}}{\lambda} \right)^{3\cdot 2^{n-2i-1}}
$$
for $n$ even. Moreover, one has
$$
\chi_1(\lambda)=  -\lambda(1-\lambda) \qquad \chi_2(\lambda)=
-\lambda(1-\lambda)^3 \qquad \chi_3(\lambda)=
-\lambda(1-\lambda)^5\cdot (\lambda^2 -3\lambda+3).
$$
\end{prop}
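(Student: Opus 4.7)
The plan is to apply Equation (2) of Theorem \ref{twopolynomials}, namely $\chi(G,\lambda)=(-1)^{r(G)}\lambda^{k(G)}T(G;1-\lambda,0)$, to the Schreier graph $\B_n^{\ast}$. Since $\B_n^{\ast}$ is connected we have $k(\B_n^{\ast})=1$ and $r(\B_n^{\ast})=|V(\B_n^{\ast})|-1=2^n-1$, which is odd for every $n\geq 1$. Hence the formula reduces to
\[
\chi_n(\lambda)=-\lambda\,T_n^{\ast}(1-\lambda,0),
\]
so the entire problem is reduced to substituting $x=1-\lambda$ and $y=0$ into the explicit expression for $T_n^{\ast}(x,y)$ provided by Corollary \ref{cor.tutte BAS star}.

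The key observation is that each cycle factor appearing in $T_n^{\ast}(x,y)$ has the shape $y+x+x^2+\cdots+x^{2^k-1}$ for a suitable $k$. After setting $y=0$ and $x=1-\lambda$, this becomes a geometric sum in $1-\lambda$, which evaluates as
\[
(1-\lambda)+(1-\lambda)^2+\cdots+(1-\lambda)^{2^k-1}
=\frac{(1-\lambda)-(1-\lambda)^{2^k}}{\lambda}.
\]
This is precisely the shape of the factors displayed in the statement. I would carry out this simplification factor by factor: the factor of exponent $4$ (resp.\ $1$) in Corollary \ref{cor.tutte BAS star} for $n$ odd corresponds to $k=(n-1)/2$ (resp.\ $k=(n+1)/2$), while the indexed product corresponds to $k=i$ for $i$ running from $1$ up to $(n-1)/2-1$; the case $n$ even is analogous with a single cycle length exponent $3$ and the product running up to $n/2-1$. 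Multiplying all these simplified factors by the overall $-\lambda$ gives the claimed formula.

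For the small values $n=1,2,3$, the formulas given in Corollary \ref{cor.tutte BAS star} are slightly different, so these cases must be handled separately. This is a direct evaluation: $T_1^{\ast}(1-\lambda,0)=1-\lambda$, $T_2^{\ast}(1-\lambda,0)=(1-\lambda)^3$, and $T_3^{\ast}(1-\lambda,0)=(1-\lambda)^4\bigl((1-\lambda)+(1-\lambda)^2+(1-\lambda)^3\bigr)=(1-\lambda)^5(\lambda^2-3\lambda+3)$, each multiplied by $-\lambda$.

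There is no real obstacle here beyond bookkeeping: the entire content of the proof is the geometric series identity combined with the factored form of $T_n^{\ast}(x,y)$ from Corollary \ref{cor.tutte BAS star}. The only mildly delicate point is to match the multiplicities $4$, $1$, and $3\cdot 2^{n-2i-1}$ (resp.\ $3$) of the cycle factors to the corresponding exponents in the chromatic polynomial, and to check that the sign $(-1)^{2^n-1}=-1$ is indeed absorbed into the single overall factor $-\lambda$ in the final expression.
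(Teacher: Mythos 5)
Your proposal is correct and follows exactly the paper's (one-line) proof: apply Equation (2) of Theorem \ref{twopolynomials} to the factored Tutte polynomial of Corollary \ref{cor.tutte BAS star}, with the geometric-series simplification and the sign check $(-1)^{2^n-1}=-1$ being the only details to fill in. Your computation in fact shows that the exponent in the indexed product should read $(1-\lambda)^{2^i}$ rather than $(1-\lambda)^{2i}$, which appears to be a typo in the stated proposition.
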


\begin{proof}
It suffices to apply Equation (2) of Theorem \ref{twopolynomials}.
\end{proof}

\begin{os}\rm
Note that $\chi_n(2)=2$, for each $n\geq 1$, according to the fact
that the graph is bipartite and so uniquely $2$-colorable.
\end{os}

Finally, we investigate the relationship between the evaluation of
the Tutte polynomial of the Schreier graph $\B_n^{\ast}$ on the
hyperbola $(x-1)(y-1)=2$ and the partition function of the Ising
model on the same graph. In \cite[Theorem 2.4]{noiising}, the
partition function of the Ising model on $\B_n^{\ast}$ has been
described as
$$
Z_n=2^{2^n} (\cosh(\beta J))^{3\cdot
2^{n-1}}\Phi_n(\tanh(\beta J)),
$$
where $\Phi_n(z)$ is the generating function of closed polygons
for $\B_n^\ast$ given by
$$
\Phi_n(z) =  \left(1+z^{2^{\frac{n-1}{2}}}\right)^4
\left(1+z^{2^{\frac{n+1}{2}}}\right)
\prod_{i=1}^{\frac{n-1}{2}-1}\left(1+z^{2^i}\right)^{3\cdot
2^{n-2i-1}}
$$
for $n\geq 5$ odd and
$$
\Phi_n(z) = \left(1+z^{2^{\frac{n}{2}}}\right)^3 \prod_{i=1}^{\frac{n}{2}-1}\left(1+z^{2^i}\right)^{3\cdot
2^{n-2i-1}}
$$
for $n\geq 4$ even. Moreover,
$$
Z_1=2^2\cosh^2(\beta J)\left(1+\tanh^2(\beta J)\right)
$$
$$
Z_2=2^4\cosh^6(\beta J)\left(1+\tanh^2(\beta J)\right)^3
$$
$$
Z_3=2^8\cosh^{12}(\beta J)\left(1+\tanh^2(\beta
J)\right)^4\left(1+\tanh^4(\beta J)\right).
$$

\begin{teo}\label{thmisingBas}
For each $n\geq 1$, one has
\begin{eqnarray}\label{EQUAisingBASI}
2(e^{2\beta J}-1)^{|V(\B_n^{\ast})|-1}\cdot e^{-\beta
J|E(\B_n^{\ast})|}\cdot T_n^{\ast}\left(\frac{e^{2\beta J}+1}{e^{2\beta
J}-1},e^{2\beta J}\right) = Z_n.
\end{eqnarray}
\end{teo}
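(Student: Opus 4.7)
The plan is to mirror the proof of Theorem \ref{thmisinggrig}: set $t=e^{\beta J}$ in both sides of \eqref{EQUAisingBASI}, reduce each side to a single product indexed by the cycles of $\B_n^\ast$, and verify that the two products agree. By the multiplicative property \eqref{prodotto} and Lemma \ref{singlecycle}, $T_n^\ast$ is the product of polynomials $y+x+\cdots+x^{k-1}$, one for each cycle of $\B_n^\ast$ of length $k$; the generating function $\Phi_n(z)$ appearing in $Z_n$ is visibly the product $\prod_C(1+z^{\ell(C)})$ over the same cycles (comparing its closed form with Proposition \ref{numbercycles}).

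The core single-cycle computation is the identity
\begin{equation*}
T(C_k;x,y)\Big|_{x=(t^2+1)/(t^2-1),\,y=t^2}=\frac{(t^2+1)^k+(t^2-1)^k}{2\,(t^2-1)^{k-1}},
\end{equation*}
which I would prove by summing the geometric series $\sum_{j=0}^{k-1}x^j$, using $(t^2-1)x=t^2+1$, and rearranging (this is exactly the calculation used in Theorem \ref{thmisinggrig}). Inserting $|V(\B_n^\ast)|=2^n$, $|E(\B_n^\ast)|=3\cdot 2^{n-1}$ into the LHS of \eqref{EQUAisingBASI} and using that $\B_n^\ast$ is a cactus with $|E|-|V|+1=2^{n-1}+1$ cycles of total length $3\cdot 2^{n-1}$, the powers of $(t^2-1)$ and of $2$ collapse, yielding
\begin{equation*}
\text{LHS}=\frac{1}{2^{2^{n-1}}\,t^{3\cdot 2^{n-1}}}\prod_C\bigl[(t^2+1)^{\ell(C)}+(t^2-1)^{\ell(C)}\bigr].
\end{equation*}

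For the RHS, I would substitute $\cosh(\beta J)=(t^2+1)/(2t)$ and $\tanh(\beta J)=(t^2-1)/(t^2+1)$ in the expression for $Z_n$ and rewrite
\begin{equation*}
\Phi_n(\tanh(\beta J))=\prod_C\frac{(t^2+1)^{\ell(C)}+(t^2-1)^{\ell(C)}}{(t^2+1)^{\ell(C)}}.
\end{equation*}
Combining the prefactors, using $\sum_C\ell(C)=3\cdot 2^{n-1}$ and $2^n-3\cdot 2^{n-1}=-2^{n-1}$, the RHS reduces to the same expression as the LHS. The main obstacle is purely clerical: carefully tracking the exponents of $2$, $t$, $t^2-1$ and $t^2+1$ through all the cancellations. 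The small cases $n=1,2,3$, for which $\Phi_n$ and $T_n^\ast$ are listed explicitly in Corollary \ref{cor.tutte BAS star} and in the statement of $Z_n$ rather than through the general product formula, are then checked by direct substitution.
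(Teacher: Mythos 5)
Your proposal is correct and follows essentially the same route as the paper: substitute $t=e^{\beta J}$, use the cactus/cycle decomposition with the multiplicative property to evaluate each cycle's Tutte polynomial on the hyperbola (the identity $y+x+\cdots+x^{k-1}=\frac{(t^2+1)^k+(t^2-1)^k}{2(t^2-1)^{k-1}}$ is exactly the computation underlying the paper's displayed formula for $T_n^\ast\bigl(\frac{t^2+1}{t^2-1},t^2\bigr)$), and match factor-by-factor against $\Phi_n\bigl(\frac{t^2-1}{t^2+1}\bigr)$. Your cycle-indexed bookkeeping (using $\#\text{cycles}=|E|-|V|+1=2^{n-1}+1$ and $\sum_C\ell(C)=3\cdot2^{n-1}$) is a slightly cleaner, case-free packaging of the same cancellation the paper carries out separately for $n$ even (and leaves to the reader for $n$ odd and $n\le 3$), and your exponent arithmetic checks out against the paper's common value $\frac{1}{2^{2^{n-1}}t^{3\cdot2^{n-1}}}\prod_C\bigl[(t^2+1)^{\ell(C)}+(t^2-1)^{\ell(C)}\bigr]$.
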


\begin{proof}
Here we only prove the case of $n$ even (the computations for $n$
odd are similar). Recall that $|E(\B_n^{\ast})| = 3\cdot 2^{n-1}$
and $|V(\B_n^{\ast})|= 2^n$. Let $e^{\beta J}=t$, so that Equation
\eqref{EQUAisingBASI} can be written as
\begin{eqnarray}\label{equalityisingBAS}
\frac{2(t^2\!-\!1)^{2^n-1}}{t^{3\cdot
2^{n-1}}}\cdot T_n^{\ast}\left(\frac{t^2\!+\!1}{t^2\!-\!1},t^2\!\right)\!\!\!&=&\!\!\!
2^{2^n}\!\left(\frac{t^2\!+\!1}{2t}\right)^{3\cdot 2^{n-1}}
\left(1\!+\!z^{2^{\frac{n}{2}}}\right)^3\!
\prod_{i=1}^{\frac{n}{2}-1}\!\!\left(1\!+\!z^{2^i}\right)^{3\cdot
2^{n-2i-1}}\!\!
\left|_{z=\frac{t^2-1}{t^2+1}}\right..
\end{eqnarray}
One can directly check that
$$
T_n^{\ast}\left(\frac{t^2\!+\!1}{t^2\!-\!1},t^2\right)=
\!\left(\frac{t^2\!-\!1}{2}\!\left(1\!+\!\left(\frac{t^2\!+\!1}{t^2\!-\!1}
\right)^{2^\frac{n}{2}}\right)\!\!\right)^3 \prod_{i=1}^{\frac{n}{2}-1}\left(\frac{t^2\!-\!1}{2}\! \left(1\!+\!\left(\frac{t^2\!+\!1}{t^2\!-\!1}\right)^{2^i}\right) \!\!\right)^{3\cdot
2^{n-2i-1}}
$$
Then, it is not difficult to prove that both sides of Equation
\eqref{equalityising} are equal to
$$
\frac{(t^2+1)^{3\cdot 2^{n-1}}}{t^{3\cdot
2^{n-1}}2^{2^{n-1}}}\!\left(1\!+\!\left(\frac{t^2\!-\!1}{t^2\!+\!1} \right)^{2^\frac{n}{2}}\right)^3\prod_{i=1}^{\frac{n}{2}-1} \left(1\!+\!\left(\frac{t^2\!-\!1}{t^2\!+\!1}\right)^{2^i}\right)^{3\cdot
2^{n-2i-1}}.
$$
\end{proof}

\end{document}